\definecolor{darkgreen}{rgb}{0,0.5,0}
\definecolor{darkred}{rgb}{0.7,0,0}
\theoremstyle{plain}
\newtheorem{lemma}{Lemma}[section]
\newtheorem{thm}[lemma]{Theorem}
\newtheorem{prop}[lemma]{Proposition}
\newtheorem{cor}[lemma]{Corollary}
\theoremstyle{definition}
\newtheorem{defn}[lemma]{Definition}
\newtheorem{rmk}[lemma]{Remark}
\numberwithin{equation}{section}
\newcommand{\m}{\ensuremath{{\cal M}}}
\newcommand{\n}{\ensuremath{{\cal N}}}
\newcommand{\pl}[2]{{\frac{\partial #1}{\partial #2}}}
\newcommand{\al}{\alpha}
\newcommand{\be}{\beta}
\newcommand{\ga}{\gamma}
\newcommand{\de}{\delta}
\newcommand{\Om}{\Omega}
\newcommand{\la}{\lambda}
\newcommand{\si}{\sigma}
\renewcommand{\th}{\theta}
\newcommand{\vph}{\varphi}
\newcommand{\ep}{\varepsilon}
\newcommand{\R}{\ensuremath{{\mathbb R}}}
\newcommand{\C}{\ensuremath{{\mathbb C}}}
\newcommand{\downto}{\downarrow}
\newcommand{\lap}{\Delta}
\newcommand{\grad}{\nabla}
\DeclareMathOperator{\Vol}{Vol}
\def\blbox{\quad \vrule height7.5pt width4.17pt depth0pt}
\newcommand{\beq}{\begin{equation}}
\newcommand{\eeq}{\end{equation}}
\newcommand{\beqa}{\begin{equation}\begin{aligned}}
\newcommand{\eeqa}{\end{aligned}\end{equation}}
\newcommand{\brmk}{\begin{rmk}}
\newcommand{\ermk}{\end{rmk}}
\newcommand{\partref}[1]{\hbox{(\csname @roman\endcsname{\ref{#1}})}}
\newcommand{\half}{\frac{1}{2}}
\newcommand{\cmt}[1]{\opt{draft}{\textcolor[rgb]{0.5,0,0}{
$\LHD$ #1 $\RHD$\marginpar{\blbox}}}}
\newcommand{\Rm}{{\mathrm{Rm}}}
\newcommand{\Ric}{{\mathrm{Ric}}}
\newcommand*\dz{\ensuremath{\mathrm{d}z}}
\title{{
\bf
Uniqueness and nonuniqueness for Ricci flow on surfaces:
Reverse cusp singularities
}
\\ 
\cmt{DRAFT with comments}
}
\author{Peter M. Topping}
\date{\today}
\begin{document}

\maketitle
\parskip=10pt

\begin{abstract}
We extend the notion of what it means for a complete 
Ricci flow to have a given initial metric, and consider
the resulting well-posedness issues that arise in the
2D case.
On one hand we construct examples of nonuniqueness
by showing that surfaces with cusps can evolve either
by keeping the cusps or by contracting them.
On the other hand, by adding a noncollapsedness assumption
for the initial metric, we establish a uniqueness result.
\end{abstract}

\section{Introduction}
\label{intro}

A complete Ricci flow $(\m,g(t))$ is a smooth family of complete 
Riemannian metrics on a manifold $\m$, for $t$ within some
interval in $\R$, which satisfies Hamilton's nonlinear PDE
\beq
\label{RFeq}
\pl{}{t}g(t)=-2\,\Ric[g(t)].
\eeq
In the case that $\m$ is two-dimensional, the flow preserves
the conformal class of the metric and can be written
$$\pl{}{t}g(t)=-2K g(t),$$
where $K$ is the Gauss curvature of $g(t)$.
In this case, we may take local isothermal coordinates
$x$ and $y$, and write the flow $g(t)=e^{2u}(dx^2+dy^2)$
for some locally-defined scalar time-dependent function $u$ which will
then satisfy the local equation
\beq
\label{2DRFeq}
\pl{u}{t}=e^{-2u}\lap u = -K.
\eeq
Returning to the case that $\m$ is of arbitrary dimension,
Hamilton \cite{ham3D} and Shi \cite{shi} developed an
existence theory for this equation when a complete 
bounded-curvature initial metric $g_0$ was specified.
In other words, they found a complete bounded-curvature
Ricci flow $g(t)$ for $t\in [0,T]$ (some $T>0$) with
$g(0)=g_0$.
Hamilton \cite{ham3D} and Chen-Zhu \cite{chenzhu}
proved that this flow is unique within the class of
complete bounded-curvature Ricci flows.

In this paper we consider existence and particularly uniqueness
issues when we drop the restriction that the complete Ricci flows have
bounded curvature, and generalise the notion of initial metric 
as follows.

\begin{defn}
\label{initconddef}
We say that a complete Ricci flow  $(\m,g(t))$ for $t\in (0,T]$ 
has a complete Riemannian manifold $(\n,g_0)$ as initial condition
if there exists a smooth map $\vph:\n\to\m$, diffeomorphic
onto its image, such that 
$$\vph^*(g(t))\to g_0$$
smoothly locally on $\n$ as $t\downto 0$.
\end{defn}

In practice, we will be interested in the case that
$(\n,g_0)$ has bounded curvature but $g(t)$ is allowed to
have curvature with no uniform upper bound.
In this way, $\m$ and $\n$ may not be diffeomorphic since
parts of $\m$ may be shot out to infinity as $t\downto 0$ 
resulting in a change of topology in the limit.

This generalised notion of initial condition permits some 
new types of solution which do not fit into the classical
framework. In particular, we show that a bounded-curvature
Riemannian surface with a hyperbolic cusp need
not be obliged to flow forwards in time retaining the cusp
(as in Shi's solution) but can add in a point at infinity,
removing the puncture in the surface, and let the cusp contract
in a controlled way. More generally we have:

\begin{thm}
\label{cuspcontractthm}
Suppose $\m$ is a compact Riemann surface and 
$\{p_1,\ldots,p_n\}\subset \m$ 
is a finite set of distinct points. If $g_0$ is a complete, 
bounded-curvature, smooth, conformal metric on $\n:=\m\backslash \{p_1,\ldots,p_n\}$
with strictly negative curvature in a neighbourhood of each point
$p_i$, then there exists a 
Ricci flow $g(t)$ on $\m$
for $t\in (0,T]$ (for some $T>0$) having $(\n,g_0)$ as initial
condition in the sense of Definition \ref{initconddef}. 
We can take the map $\vph$ there to be the natural 
inclusion of $\n$ in $\m$. 

Moreover, the cusps contract logarithmically in the sense
that for some $C<\infty$ and all 
$t\in (0,T]$ sufficiently small, 
we have
\begin{equation}
\label{logdecay}
\frac{1}{C}(-\ln t)\leq diam (\m,g(t)) \leq C(-\ln t).
\end{equation}

Furthermore, the curvature of $g(t)$ is bounded below uniformly
as $t\downto 0$.
\end{thm}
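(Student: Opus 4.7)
The strategy is to construct $g(t)$ as a smooth limit of Ricci flows $g_k(t)$ on the compact surface $\m$, whose initial data $g_{0,k}$ approximate $g_0$ on $\n$ and are regularly extended over the punctures. I would fix a small isothermal disk around each $p_i$ on which $K_{g_0}$ is strictly negative, and in a shrinking sub-disk $U_{i,k}\downarrow\{p_i\}$ interpolate smoothly from $g_0$ to, say, a flat disk metric centered at $p_i$, arranging that $\inf_\m K_{g_{0,k}}$ is bounded below independently of $k$; this is possible precisely because $K_{g_0}$ is strictly negative near each $p_i$. Hamilton's short-time existence applied to each smooth compact $(\m,g_{0,k})$ yields a smooth Ricci flow $g_k(t)$ on $\m$ for $t\in[0,T_k]$, with $T_k>0$.

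The core of the argument is a uniform existence time $T_k\geq T>0$ together with local curvature bounds suitable for a Hamilton-style compactness extraction. The lower curvature bound is automatic: the scalar evolution $\pl{K}{t}=\lap K+2K^2$ and the maximum principle on compact $\m$ give $K_{g_k(t)}\geq -C/(1-2Ct)$ throughout $[0,T_k]$, which survives in the limit and yields the last conclusion of the theorem. A local upper bound on $K_{g_k(t)}$ on any fixed compact $\Omega\Subset\n$ follows from Perelman-style pseudolocality, since for $k$ large the initial data agree with $g_0$ on $\Omega$ and enjoy geometry bounded uniformly in $k$. Near each cusp the delicate point is to prevent $g_k(t)$ from pinching or blowing up in a short time; for this I would build an upper barrier for the conformal factor $u$, based on a suitably normalized self-similar cusp-contracting solution of $\pl{u}{t}=e^{-2u}\lap u$ on a punctured disk, matched at the boundary of a small disk around $p_i$ to the control already known for $g_k(t)$ there. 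Combined with the lower curvature bound (which controls $u$ from below), Shi derivative estimates then give smooth subsequential convergence of $g_k(t)$ on compact subsets of $\m\times(0,T]$ to a smooth Ricci flow $g(t)$.

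To verify the initial condition in the sense of Definition \ref{initconddef}, observe that on any fixed compact $\Omega\Subset\n$ the metrics $g_{0,k}|_\Omega$ are eventually equal to $g_0|_\Omega$; the uniform bounds above together with continuous dependence on initial data then force $g_k(t)|_\Omega\to g_0|_\Omega$ smoothly as $t\downto 0$, and the same passes to the limit, showing that $\vph$ may be taken to be the natural inclusion $\n\embed\m$. For \eqref{logdecay} one compares to the same barrier: the model cusp has $g$-diameter exactly of order $-\ln t$, giving the upper estimate for the diameter contribution of each cusp (and hence, combined with the uniformly bounded diameter of the complement, of $\m$ itself); the lower bound follows by integrating the length of a radial ray from a capped-off $p_i$ to a fixed compact set, using that the conformal factor is bounded below there by the lower barrier.

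The main obstacle I anticipate is the upper barrier construction: one must produce barriers for the conformal factor that are valid uniformly for all $g_k(t)$, that are compatible with the fixed far-field behavior dictated by $g_0$, and that capture the sharp logarithmic rate rather than a weaker polylogarithmic one. Once such a barrier is in hand, the lower bound on $K$, the compactness argument, the verification of the initial condition, and the two-sided diameter estimate all become standard.
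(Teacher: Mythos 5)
Your overall architecture matches the paper's: approximate $g_0$ by smooth metrics on $\m$ obtained by capping off each cusp (the paper's Lemma \ref{trunclemma} takes a smoothed minimum of $g_0$ with a large constant conformal factor, producing an \emph{increasing} sequence $g_k\leq g_0$ with uniformly bounded-below curvature), flow each one, get the lower curvature bound by the maximum principle, and pass to a limit. But the two steps you explicitly defer are exactly where the content of the theorem lives, and as stated your substitutes would not work. First, the upper barrier: there is no usable ``self-similar cusp-contracting solution'' to match to; the paper's Lemma \ref{flowupperbdlemma} instead builds the barrier from a one-parameter family of \emph{spherical caps} of exponentially small radius $\la(t)=e^{-6/t}$, translated so as to touch the static hyperbolic upper bound $h(r)=-\ln[r(-\ln r)]+\half\ln 3$ at $r=\la(t)$; the point is that the cap moves \emph{outward} like $t$ (not by Ricci flow) and a direct computation shows $\pl{U}{t}-e^{-2U}\lap U\geq 6/t^2>0$ at any interior touching point, yielding the uniform bound $u\leq C/t$ for all the approximating flows. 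This is what lets the limit extend smoothly across $p_i$ for $t>0$ and gives the diameter upper bound; note also that it requires $g_k(0)\leq g_0$ with $K[g_0]\leq -1$ near the puncture (via Lemma \ref{upperbdlemma}), a property your ``interpolate to a flat disk metric'' construction does not obviously guarantee.

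Second, the diameter lower bound: you appeal to ``the lower barrier,'' but no lower barrier for the evolving conformal factor has been constructed, and the lower bound on $K$ alone only prevents the metric from growing, not from collapsing. The paper's mechanism is a pseudolocality-type estimate (Chen's Proposition \ref{chenprop}): at a point $z$ near the puncture one takes $r_0=1/(-\ln|z|)$, checks the volume-ratio hypothesis using the comparability of $g_0$ with the hyperbolic cusp (Lemmata \ref{upperbdlemma} and \ref{lowerbdlemma}), and concludes $|K|\leq C(\ln|z|)^2$ for times up to $1/(C(\ln|z|)^2)$; integrating the Ricci flow equation then keeps the conformal factor hyperbolic-sized out to radius $\underline r=e^{-1/(C\sqrt t)}$, and integrating $e^{u}$ over $[\underline r,\overline r]$ gives the $-\ln t/C$ lower bound. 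Without some quantitative persistence statement of this kind you cannot exclude that the cusps contract faster than logarithmically. Your use of pseudolocality \emph{away} from the cusps and of ``continuous dependence'' for the initial condition is also heavier than needed: the paper simply sandwiches $g_k(t)$ between $g_1(t)$ and Shi's complete flow $g_s(t)$ (justified by the maximum principle, using that the conformal factors of $g_s(t)$ blow up at the puncture) and invokes interior parabolic regularity.
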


Thus a specific example of nonuniqueness would be when 
the Riemann surface $\m$ is a torus $T^2$, 
we remove one point to give $\n$, and let $g_0$ be the 
unique complete conformal hyperbolic metric on $\n$.
One Ricci flow continuation would be the homothetically
expanding one (which coincides with the solution constructed
by Shi) but another continuation would see the cusp 
contract with the subsequent Ricci flow living on the whole
torus $\m$.

One characteristic of these nonuniqueness examples is 
that the initial condition $(\n,g_0)$ does not have a lower
bound for its injectivity radius, or equivalently that 
one can find unit balls of arbitrarily small area.
In fact, we will see in a corollary to the following theorem 
that this is a necessary condition for nonuniqueness.

\begin{thm}
\label{noncollapsedthm}
Suppose that $(\n,g_0)$ is a complete Riemannian surface 
with bounded curvature which is noncollapsed in the sense
that for some $r_0>0$ we have 
\begin{equation}
\label{noncollapsed}
\Vol_{g_0}(B_{g_0}(x,r_0))\geq \ep>0
\eeq
for all $x\in\n$.
If $(\m,g(t))$ is a complete Ricci flow for $t\in (0,T]$ (some $T>0$)
which has $(\n,g_0)$ as initial condition in the sense of
Definition \eqref{initconddef}, then $(\m,g(t))$ has uniformly
bounded curvature over some time interval $(0,\de]$ 
(some $\de\in (0,T]$). Moreover, the $\vph$ from Definition 
\eqref{initconddef} must be a diffeomorphism (i.e. also surjective)
and $g(t)$ can be extended smoothly down to $t=0$ on the whole
of $\m$ by setting $g(0):=\vph_* g_0$.
\end{thm}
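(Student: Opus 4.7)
The three conclusions---uniform curvature bound on some $(0,\delta]$, surjectivity of $\vph$, and smooth extension of $g(t)$ down to $t=0$---are intertwined, but surjectivity and extension follow relatively standardly once the first is secured. My plan is therefore to focus on proving $|K(g(t))| \leq C$ on $(0,\delta]$, and then deduce the remaining conclusions in turn.

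A lower bound on $K$ comes from the evolution $\pl{R}{t} = \Delta R + R^2$ via a local maximum principle, using that on compact subsets of $\vph(\n)$ the metric $\vph^*g(t)$ converges smoothly to $g_0$, which is bounded-curvature. For the upper bound I would argue by contradiction, point-picking, and rescaling. Suppose $(x_n,t_n) \in \m \times (0,T]$ satisfies $t_n \downto 0$ and $|K(x_n,t_n)| = Q_n \to \infty$. A Hamilton-style point selection puts $(x_n,t_n)$ at an approximate local spacetime maximum of $|K|$ over a parabolic neighbourhood whose rescaled size grows with $n$. Setting $g_n(s) := Q_n g(t_n + s/Q_n)$ produces pointed Ricci flows with uniformly bounded curvature and $|K_{g_n}|(x_n,0)=1$; noncollapsedness of $g_n(0)$ at unit scale is then obtained by a Bishop-Gromov argument at scale $1/\sqrt{Q_n}$ in the original flow, seeded by the noncollapsing of $(\n,g_0)$ and the smooth local convergence $\vph^* g(t) \to g_0$. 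By Cheeger-Hamilton compactness one extracts a complete, bounded-curvature, noncollapsed ancient 2D Ricci flow with $|K|=1$ at the basepoint. The lower bound from the previous step rescales to nonnegativity of $K$ in the limit, and classification of complete noncollapsed 2D ancient Ricci flows with bounded nonnegative curvature (excluding the cigar by noncollapsing) should reduce matters to ruling out a shrinking round sphere model, which one excludes using the specific noncollapsed profile of $g_0$ that survives into the limit.

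With the uniform curvature bound in hand, surjectivity of $\vph$ follows cleanly. Integrating $\pl{g}{t} = -2\Ric$ against the global bound, $\vph^*g(t)$ is uniformly comparable to $g_0$ on all of $\n$ for small $t$. If $p \in \m \setminus \vph(\n)$ and $\vph(x_n) \to p$, then since $\vph$ is a diffeomorphism onto its open image we must have $x_n \to \infty$ in $\n$; after passing to a subsequence the balls $B_{g_0}(x_n,r_0)$ are pairwise disjoint with $g_0$-area at least $\ep$ and $g_0$-diameter at most $2r_0$. Their $\vph$-images are then pairwise disjoint subsets of $\m$ with $g(t)$-area at least $\ep/2$ and $g(t)$-diameter at most $3r_0$, all accumulating at $p$; for small fixed $t$ they are therefore all contained in a ball $B_{g(t)}(p,4r_0)$ of finite $g(t)$-area, a contradiction. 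Given surjectivity, Shi-style derivative estimates applied to the bounded-curvature flow let $g(t)$ extend smoothly down to $t=0$ with limit equal to $\vph_*g_0$ as forced by Definition \ref{initconddef}.

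The main obstacle is the upper curvature bound. Although the blow-up extraction itself is routine given modern Ricci flow technology, carrying the noncollapsing hypothesis from $(\n,g_0)$ all the way into the ancient limit---which requires propagating noncollapsing on the original flow $g(t)$ near points that may, a priori, lie in $\m \setminus \vph(\n)$---is the delicate point. There is a chicken-and-egg flavour to the argument: surjectivity of $\vph$ would make the transfer of noncollapsing transparent, but proving surjectivity seems to need the curvature bound first. Threading this needle, and classifying the noncollapsed ancient 2D limit precisely enough to derive a contradiction, is where the work lies.
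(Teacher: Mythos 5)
Your strategy for the central step --- blow-up, point-picking, compactness, and classification of noncollapsed 2D ancient solutions --- is not the paper's, and more importantly it is not carried out: you yourself flag the transfer of noncollapsing into the blow-up limit as an unresolved ``chicken-and-egg'' difficulty, and that is exactly where the argument fails as written. Points of curvature concentration may lie in $\m\setminus\vph(\n)$, or escape to infinity in $\n$ as $t\downto 0$, precisely where the smooth local convergence $\vph^*g(t)\to g_0$ gives no information; and Perelman's no-local-collapsing is unavailable because there is no controlled initial time slice. Two further steps are also not secure: the hypotheses do not give bounded curvature of $g(t)$ at each fixed positive time, so Hamilton-type point selection and the Cheeger--Hamilton compactness you invoke do not apply directly; and ``excluding a shrinking round sphere using the specific noncollapsed profile of $g_0$'' is a hope rather than an argument. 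Even your lower curvature bound is not routine: a local maximum principle seeded only on compact subsets of $\vph(\n)$ says nothing about the complement, and the general a priori bound $R\geq -1/t$ for complete surface flows degenerates exactly in the limit $t\downto 0$ that matters here.

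The paper sidesteps all of this with a single local estimate: Chen's Proposition 3.9 of \cite{stronguniqueness} (Proposition \ref{chenprop} in the text), which rests on Perelman's distance estimates and requires only a curvature bound and a volume lower bound on one ball at one time. One first shrinks $r_0$ (Bishop--Gromov preserves \eqref{noncollapsed}) so that in addition $|R[g_0]|\leq\half r_0^{-2}$, then verifies the proposition's hypotheses on $B_{g(t_0)}(x_0,r_0)$ at a slightly positive time $t_0$, where they are inherited from $g_0$ via the smooth local convergence; letting $t_0\downto 0$ gives $|R[g(t)]|\leq 2r_0^{-2}$ for all $t\leq\de r_0^2$, producing both bounds at once with no blow-up analysis and no classification of ancient solutions. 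Your deductions of surjectivity and of the extension to $t=0$ from the curvature bound are plausible in outline (the paper instead argues via a curve exiting $\vph(\n)$ whose $g_0$-length is infinite but whose $g(t)$-lengths are uniformly bounded), but since the curvature bound itself is not established, the proposal does not constitute a proof.
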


The proof of this theorem uses the work of Chen 
\cite{stronguniqueness}, which in turn uses the work
of Perelman \cite{P1}. It is possible to prove a 
variant of this result which is applicable to Ricci flows
on higher-dimensional manifolds, albeit with slightly 
stronger hypotheses (see \cite{rick_survey}).

\begin{cor}
\label{uniquenesscor}
With $(\n,g_0)$ as in the theorem above, if for $i=1,2$ we have
complete Ricci flows $(\m_i,g_i(t))$ for $t\in (0,T_i]$ (some $T_i>0$)
with $(\n,g_0)$ as initial condition, 
then these two Ricci flows must agree over some nonempty
time interval $t\in (0,\de]$ in the sense that
there exists a diffeomorphism $\psi:\m_1\to\m_2$ with
$\psi^*(g_2(t))=g_1(t)$ for all $t\in (0,\de]$.
\end{cor}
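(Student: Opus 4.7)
The plan is to reduce the corollary to the classical uniqueness theorem for complete bounded-curvature Ricci flows due to Hamilton \cite{ham3D} and Chen-Zhu \cite{chenzhu}, with Theorem \ref{noncollapsedthm} doing all the heavy lifting. The key observation is that under the noncollapsedness hypothesis, each of the two flows is secretly a bounded-curvature Ricci flow starting from a genuine initial metric on $\m_i$, not just a flow with a weaker notion of initial condition on $\n$.

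First I apply Theorem \ref{noncollapsedthm} to each of the flows $(\m_i,g_i(t))$ for $i=1,2$. This produces, for each $i$, a time $\de_i \in (0,T_i]$ such that $g_i(t)$ has uniformly bounded curvature on $(0,\de_i]$, a diffeomorphism $\vph_i:\n\to\m_i$ (in particular surjective) furnished by Definition \ref{initconddef}, and a smooth extension of $g_i$ down to $t=0$ on all of $\m_i$ satisfying $g_i(0)=(\vph_i)_*g_0$. After possibly shrinking $\de_i$, the curvature of $g_i(t)$ is bounded uniformly on the closed interval $[0,\de_i]$, since $g_0$ has bounded curvature and the flow is smooth on this interval. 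Set $\de:=\min(\de_1,\de_2)>0$.

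Next I define the candidate isometry $\psi:=\vph_2\smallcirc\vph_1^{-1}:\m_1\to\m_2$ and pull back the second flow to $\m_1$. Since $(\vph_i)^*(\vph_i)_*g_0=g_0$, a short computation gives
\beq
\psi^*g_2(0) \;=\; (\vph_1^{-1})^*\vph_2^*(\vph_2)_*g_0 \;=\; (\vph_1^{-1})^*g_0 \;=\; (\vph_1)_*g_0 \;=\; g_1(0),
\eeq
so $g_1(t)$ and $\psi^*g_2(t)$ are two complete Ricci flows on $\m_1$ for $t\in[0,\de]$, both smooth down to $t=0$, both with uniformly bounded curvature, and both sharing the same initial metric $g_1(0)$. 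The Hamilton--Chen-Zhu uniqueness theorem then yields $\psi^*g_2(t)=g_1(t)$ for all $t\in[0,\de]$, which is precisely the conclusion.

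The main obstacle in this argument is not in the corollary itself but in Theorem \ref{noncollapsedthm}, which already supplies the two essential facts (surjectivity of $\vph$ and the existence of a smooth bounded-curvature extension to $t=0$) that bridge the generalised notion of initial condition in Definition \ref{initconddef} with the classical setting. Once those are granted, the corollary is a matter of carefully composing the two diffeomorphisms and invoking the classical uniqueness statement, with no further analytic input required.
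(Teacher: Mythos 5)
Your proposal is correct and follows essentially the same route as the paper: apply Theorem \ref{noncollapsedthm} to each flow to obtain bounded curvature and a smooth extension to $t=0$, set $\psi=\vph_2\circ\vph_1^{-1}$ so that $\psi^*g_2(0)=g_1(0)$, and conclude by the Chen--Zhu uniqueness theorem for complete bounded-curvature Ricci flows (the paper also notes the simpler two-dimensional alternative \cite[Theorem 4.2]{GT1}).
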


Despite the nonuniqueness implied by Theorem \ref{cuspcontractthm},
that construction throws up a quite different uniqueness issue:
Does there exist more than one flow which does the same job
of contracting the cusps? The next result shows that there does
not.

\begin{thm}
\label{unique_after_all}
In the situation of Theorem \ref{cuspcontractthm} (in which 
$\vph$ is the natural inclusion of $\n$ into $\m$) if
$\tilde g(t)$ is a smooth Ricci flow on $\m$ for
some time interval $t\in (0,\de)$ ($\de\in (0,T]$)
such that $\tilde g(t)\to g_0$ smoothly locally on $\n$ 
as $t\downto 0$ and the Gauss curvature of $\tilde g(t)$ is uniformly 
bounded below, then $\tilde g(t)$ agrees with the flow $g(t)$ constructed in Theorem \ref{cuspcontractthm} for $t\in (0,\de)$.
\end{thm}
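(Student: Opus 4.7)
The plan is to reduce the equality $\tilde g\equiv g$ to a maximum-principle statement on the compact surface $\m$. Fix any smooth reference metric $h$ on $\m$ in the common conformal class of $g(t)$ and $\tilde g(t)$, and write $g(t)=e^{2u(t)}h$, $\tilde g(t)=e^{2\tilde u(t)}h$ with $u,\tilde u\in C^\infty(\m\times(0,\delta))$. The 2D Ricci flow equation is $\partial_t u = e^{-2u}(\lap_h u - K_h)$ for $u$ and likewise for $\tilde u$. Subtracting and setting $v:=u-\tilde u$ gives
\[
\partial_t v \;=\; e^{-2u}\,\lap_h v \;+\; \tilde K\,(1-e^{-2v}),
\]
where $\tilde K$ is the Gauss curvature of $\tilde g(t)$. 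The source $(1-e^{-2v})$ has the same sign as $v$ and vanishes to first order at $v=0$, so this is a quasilinear parabolic equation well-suited to a maximum principle on compact $\m$: combined with $|1-e^{-2v}|\le 2|v|$ and the slab bounds $\|\tilde K\|_{L^\infty(\m\times[t_0,\delta'])}<\infty$ inherited from smoothness of $\tilde g$ on the open interval, a Gronwall estimate at the spatial extrema of $v$ will force $v\equiv 0$ on $\m\times(0,\delta)$ \emph{provided} $\sup_\m|v(\cdot,t)|\to 0$ as $t\downto 0$.

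Away from the punctures, the assumption that both flows converge smoothly locally on $\n$ to $g_0$ gives $v(t)\to 0$ in $C^\infty_{\mathrm{loc}}(\n)$ for free. The main obstacle is uniform control of $v$ in a neighbourhood of each $p_i$, where both $u(t)$ and $\tilde u(t)$ blow up to $+\infty$ as $t\downto 0$ and one must show they do so at the same rate. Upper bounds come cheaply from the uniform lower bounds on $K$ (built into Theorem~\ref{cuspcontractthm}) and on $\tilde K$ (a hypothesis): each gives $\partial_t u,\partial_t\tilde u\le K_0$, and integrating backwards over $(s,t)\subset(0,\delta')$ at each $x\in\n$ and letting $s\downto 0$ yields the matched pointwise upper bound $u(t)(x),\tilde u(t)(x)\le u_0(x)+K_0 t$.

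The matching pointwise lower bounds near each $p_i$ are the hard part. The plan is to build an explicit subsolution $\underline u$ of the conformal factor PDE on a neighbourhood of $p_i$, modelled on the contracting hyperbolic cusp used to construct $g(t)$ in Theorem~\ref{cuspcontractthm}, and verify that both $u$ and $\tilde u$ stay above $\underline u$ for small $t>0$. For $u$ this is part of the construction in Theorem~\ref{cuspcontractthm}; the same verification should go through for $\tilde u$ precisely because $\tilde u$ solves the same PDE, shares the data $u_0$ on $\n$, and enjoys the same uniform curvature lower bound---this is the step where the hypothesis on $\tilde K$, rather than mere smoothness, is genuinely used. Once $u$ and $\tilde u$ are sandwiched between common upper and lower barriers in a neighbourhood of each $p_i$, one obtains $v(t)\to 0$ uniformly on $\m$, and the maximum-principle argument above, applied to both $v$ and $-v$ (whose equation is obtained by swapping the roles of $g$ and $\tilde g$), concludes that $\tilde g(t)=g(t)$ on $(0,\delta)$.
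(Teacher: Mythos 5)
Your reduction to ``$\sup_\m|v(\cdot,t)|\to 0$ as $t\downto 0$ plus a Gronwall estimate'' has two gaps, and I don't see how to close either. First, the Gronwall step: from $\partial_t v=e^{-2u}\lap_h v+\tilde K(1-e^{-2v})$ the maximum principle gives at best
$\sup_\m|v(\cdot,t)|\le \sup_\m|v(\cdot,t_0)|\exp\bigl(2\int_{t_0}^{t}\|\tilde K(\cdot,s)\|_{L^\infty(\m)}\,ds\bigr)$,
and to conclude $v\equiv 0$ by sending $t_0\downto 0$ you need $\int_0^t\|\tilde K(\cdot,s)\|_{L^\infty}\,ds<\infty$. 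You have no such bound, and it is expected to fail: as the paper notes, a contracting cusp is a Type II(c) singularity, with $\limsup_{t\downto 0}\,t\sup_\m|\Rm|=\infty$ (conjecturally $|K|\sim C/t^2$), so the slab bounds $\|\tilde K\|_{L^\infty(\m\times[t_0,\de'])}$ degenerate too fast and the exponential factor blows up. Second, the pinching step: your common upper barrier is $u,\tilde u\le u_0+K_0t$, but $u_0\to+\infty$ at each $p_i$ (completeness of $g_0$ there, cf.\ Lemma \ref{lowerbdlemma}), while any lower barrier $\underline u$ satisfied by a flow that is smooth on all of $\m$ must be finite at $p_i$. Hence $\overline u-\underline u\equiv+\infty$ near $p_i$ and the sandwich cannot yield $\sup_\m|u-\tilde u|\to 0$. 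To make it work you would need matching two-sided asymptotics for the contracting cusp sharp enough that the barriers pinch; the paper only establishes the crude bounds $\frac{1}{C\sqrt t}\le\sup u\le\frac{C}{t}$, and nothing of the required precision is available.

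The paper's proof never estimates $u-\tilde u$ at a common time. The key a priori input (Claim 3 there) is that for \emph{any} competitor flow, the conformal factor exceeds any prescribed constant $M$ on some neighbourhood of each $p_i$ for all sufficiently small $t>0$; this is proved by a crude lower bound $u\ge m$ (integrating the curvature lower bound backwards from $t=\de/2$) combined with the monotonicity $\frac{d}{dt}\int_{D_\ep}\phi(M-u)\le 0$ for a convex nondecreasing cutoff $\phi$. This suffices to compare $\tilde g(t)$ with any smooth flow $\si(t)$ on $\m$ whose initial metric is \emph{strictly} below $g_0$, using only the ordinary comparison principle on compact slabs where everything is smooth and bounded. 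One then takes $\si$ to be a scaled, time-shifted copy of the other competitor: the curvature lower bound gives $e^{-2\be t_0}\tilde g_1(t_0)<g_0$, so $\si(t)=e^{-2\be t_0}\tilde g_1(e^{2\be t_0}t+t_0)$ qualifies, and letting $t_0\downto 0$ yields $\tilde g_1\le\tilde g_2$; symmetry finishes. The time-shift/rescale trick and the blow-up statement of Claim 3 are exactly the ingredients your proposal is missing.
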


Returning to Theorem \ref{cuspcontractthm}, one can ask
at what rate the curvature of $g(t)$ 
must blow up in the limit $t\downto 0$.
General theory tells us that this sort of behaviour cannot
occur if the curvature blows up no faster than $C/t$ (\cite{miles_oneont},
\cite{stronguniqueness}). By analogy with the terminology of Hamilton
for blow up rates \cite{formations}, we might say then that
we have a `Type II(c) singularity' meaning that
$$\limsup_{t\downto 0}\left[t\sup_\m|\Rm(\cdot,t)|\right]=\infty.$$ 
In fact, a rough asymptotic analysis of a contracting cusp
in the rotationally symmetric case, modelled by a hyperbolic
cusp capped off by an appropriately scaled cigar soliton,
suggests that the curvature blows up at a rate 
$C/t^2$.

Finally, we point out that Theorem \ref{cuspcontractthm} 
provides an answer
to Perelman's question \cite[\S 10.3]{P1} of whether the volume
ratio hypothesis is necessary in his pseudolocality theorem:
It is. More elementary examples can also be constructed (\cite{sendai_talk}).

The paper is organised as follows. In Section \ref{noncollsect}
we prove Theorem \ref{noncollapsedthm} and
its Corollary \ref{uniquenesscor}.
In Section \ref{fccsect} we derive a selection of estimates
for metrics on punctured discs, and use them to construct 
useful barriers and prove useful estimates for Ricci flow, 
with the key tool being Lemma \ref{flowupperbdlemma}. 
This technology is 
then used to prove Theorem \ref{cuspcontractthm}. 
Finally, in Section \ref{alt_sect}, we prove the uniqueness
assertion of Theorem \ref{unique_after_all}.

\emph{Acknowledgements:} This work was supported by 
The Leverhulme Trust. Parts of this work were carried
out when the author was visiting the Max Planck Albert
Einstein Institute, Golm, and the Free University, Berlin,
and he would like to thank Gerhard Huisken and Klaus Ecker
for their hospitality. Thanks also to Gregor Giesen for discussions
on the paper \cite{stronguniqueness}.

\section{Noncollapsed initial metrics}
\label{noncollsect}

In this section we prove Theorem \ref{noncollapsedthm} and
its Corollary \ref{uniquenesscor}.
The proof will extend slightly the uniqueness result in
the work of Chen \cite{stronguniqueness}, which appeals
strongly to the remarkable properties of the distance 
function on a Ricci flow discovered by Perelman \cite{P1}
in order to prove the following curvature estimate.

\begin{prop} (Chen \cite[Proposition 3.9]{stronguniqueness},
cf. Perelman \cite[\S 10.3]{P1}.)
\label{chenprop}
Let $\m$ be a surface and $g(t)$ a smooth Ricci flow on $\m$
for $t\in [0,T]$. Suppose that $x_0\in\m$ and $r_0>0$, and 
that $B_{g(t)}(x_0,r_0)\subset\subset\m$ for all $t\in [0,T]$.
If 
$$|R[g(0)]|\leq r_0^{-2}\text{ on }B_{g(0)}(x_0,r_0)
\qquad\text{ and }\qquad
\Vol_{g(0)}(B_{g(0)}(x_0,r_0))\geq v_0 r_0^2$$
for some $v_0>0$, then there exists $\de>0$ depending
on $v_0$ such that
$$|R[g(t)]|\leq 2r_0^{-2}\text{ on }B_{g(t)}\left(x_0,\frac{r_0}{2}\right)$$
for all $t\in [0,T]$ with $t\leq \de r_0^2$.
\end{prop}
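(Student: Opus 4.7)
\emph{Proof proposal.}
The plan is to follow Perelman's pseudolocality strategy \cite{P1}, using that in two dimensions a volume lower bound can stand in for his near-Euclidean isoperimetric hypothesis. After parabolic rescaling we may assume $r_0=1$. Arguing by contradiction, if no such $\de$ worked then we could find Ricci flows $g_i(t)$ satisfying the hypotheses, times $t_i\downto 0$, and points $x_i\in B_{g_i(t_i)}(x_{0,i},\half)$ at which $|R[g_i(t_i)](x_i)|>2$. A Hamilton--Perelman point-picking argument in the parabolic region $\{(x,t):t\in[0,t_i],\ x\in B_{g_i(t)}(x_{0,i},\tfrac{3}{4})\}$ yields points $(y_i,s_i)$ with $s_i\downto 0$, with $y_i\in B_{g_i(0)}(x_{0,i},1)$, and with $Q_i:=|R[g_i(s_i)](y_i)|\to\infty$, such that $|R|\leqs 4Q_i$ on a parabolic neighbourhood of $(y_i,s_i)$ of spatial size $A_iQ_i^{-1/2}$ and temporal size $A_iQ_i^{-1}$ for some $A_i\to\infty$.

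Rescale: $\tilde g_i(\tau):=Q_ig_i(s_i+\tau/Q_i)$. Then $\tilde g_i$ has uniformly bounded curvature on parabolic neighbourhoods of $(y_i,0)$ of size $A_i\to\infty$, with $|R[\tilde g_i(0)](y_i)|=1$. The decisive step, and main obstacle, is to upgrade the volume lower bound $\Vol_{g_i(0)}(B_{g_i(0)}(x_{0,i},1))\geqs v_0$ to a $\kappa$-noncollapsing statement for $\tilde g_i(0)$ at $y_i$ with $\kappa=\kappa(v_0)>0$, so that Hamilton's compactness theorem applies. This is where Perelman's distance function machinery \cite[\S\S 8--10]{P1} enters: quantitative control on $\pl{}{t}d_{g(t)}(\cdot,\cdot)$ in spacetime regions where curvature is bounded, combined with monotonicity of the reduced ($\ci$-)volume along the flow, transports the initial volume bound forward in time to a uniform positive lower bound for the volume of a definite geodesic ball around $(y_i,s_i)$ in the unrescaled flow; rescaling by $Q_i$ then yields the required noncollapsing.

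With noncollapsing secured, a subsequential smooth pointed Cheeger--Gromov limit $(\m_\infty,g_\infty(\tau),y_\infty)$ exists: a complete, bounded-curvature, $\kappa$-noncollapsed ancient Ricci flow on a surface with $|R[g_\infty(0)](y_\infty)|=1$. In two dimensions $\pl{}{t}R=\lap R+2R^2$, and running the maximum principle backwards along an ancient flow forces $R\geqs 0$. The classification of complete nonnegatively curved $\kappa$-noncollapsed ancient solutions on surfaces now yields a contradiction in every case: flat $\R^2$ contradicts $|R|(y_\infty,0)=1$; the cigar soliton is collapsed at infinity; and in the remaining compact cases (round shrinking sphere or King--Rosenau) the limit manifold is topologically $S^2$, which forces $\m_i\cong S^2$ for large $i$, so by Gauss--Bonnet $\Vol_{g_i(t)}(\m_i)$ is an affine function of $t$ with slope $-8\pi$; the convergence $Q_i\Vol_{g_i(s_i)}(\m_i)\to\Vol_{g_\infty(0)}(\m_\infty)<\infty$ then forces $\Vol_{g_i(s_i)}(\m_i)\to 0$ and hence $\Vol_{g_i(0)}(\m_i)\to 0$, contradicting $\Vol_{g_i(0)}(\m_i)\geqs v_0$.
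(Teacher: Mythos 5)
First, a point of orientation: the paper does not prove this proposition at all --- it is imported verbatim as Chen's Proposition 3.9 (cf.\ Perelman \S 10.3), so there is no internal proof to compare yours against. Judged on its own terms, your proposal has the right overall architecture for results of this type (contradiction, point-picking, parabolic rescaling, noncollapsing, Cheeger--Gromov limit, classification of ancient solutions), but it contains a genuine gap at exactly the step you yourself flag as decisive.

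The gap is the $\kappa$-noncollapsing step. The hypotheses here are purely local: $g(t)$ is merely a smooth Ricci flow on a surface $\m$ with $B_{g(t)}(x_0,r_0)\subset\subset\m$; neither completeness of $(\m,g(t))$ nor any control outside the ball is assumed. Monotonicity of Perelman's reduced volume, and the no-local-collapsing theorem built on it, require $\cl$-geodesics traced through the whole flow and do not apply off the shelf in this local setting; Perelman's own pseudolocality proof avoids this by working with the $\w$-functional and cutoffs, and the price is that the time-zero input it needs is an almost-Euclidean isoperimetric (or almost-maximal volume) hypothesis, not an arbitrary volume lower bound $v_0r_0^2$. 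The entire content of the two-dimensional statement --- and the reason $\de$ may depend on an arbitrary $v_0>0$ --- is that in 2D one can run the argument from a mere volume ratio bound. Writing ``this is where Perelman's distance function machinery enters'' and ``monotonicity of the reduced volume transports the volume bound forward'' therefore asserts the theorem rather than proving it. A second, related soft spot: with a constant curvature threshold ($|R|>2$) your point-picking does not by itself produce backward parabolic neighbourhoods of temporal size $A_iQ_i^{-1}$ with $A_i\to\infty$ (for that one needs $s_iQ_i\to\infty$, which the constant-threshold selection does not guarantee); Perelman's selection uses the threshold $\al/t$ precisely to arrange this, and one must also confine the selected points to the region where the hypotheses say anything, via the distance distortion estimates. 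The concluding classification and the Gauss--Bonnet argument in the compact case are fine in spirit, but they only become available once the two issues above are resolved.
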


\begin{proof} (Theorem \ref{noncollapsedthm}.)
First note that by the Bishop-Gromov comparison theorem,
we may reduce $r_0$ to any smaller positive value and
still have the noncollapsedness condition
\eqref{noncollapsed} for some new, possibly smaller, positive
value of $\ep$.
In particular, by making such a reduction we may assume
also that $|R[g_0]|\leq \half r_0^{-2}$ thoughout $\n$.

We now set $v_0=\frac{\ep}{2}r_0^{-2}$ and attempt to apply 
Proposition \ref{chenprop} to $(\m,g(t))$. Since $(\n,g_0)$ is the 
initial condition for $(\m,g(t))$, we see that for all 
$x_0\in\m$, and sufficiently small $t_0>0$ (depending on $x_0$) 
we have
$$|R[g(t_0)]|\leq r_0^{-2}\text{ on }B_{g(t_0)}(x_0,r_0)
\qquad\text{ and }\qquad
\Vol_{g(t_0)}(B_{g(t_0)}(x_0,r_0))\geq \frac{\ep}{2}=v_0 r_0^2.$$
Keeping in mind that we may take $t_0>0$ arbitrarily small, 
Proposition \ref{chenprop} then implies that 
$$|R[g(t)]|\leq 2r_0^{-2}\text{ on }B_{g(t)}\left(x_0,\frac{r_0}{2}\right)$$
for all $t\in (0,T]$ with $t\leq \de r_0^2$.
Since $x_0$ was arbitrary, we have established the required 
uniform curvature bound for $g(t)$.

The uniform curvature bound is then enough to force $\vph$
to be a diffeomorphism. Indeed, if we suppose that 
$\vph$ is not surjective, then we can pick $y\in\m$ outside 
its image. We can then take any
smooth immersed curve $\ga:[0,1]\to\m$ so that $\ga(0)$ lies
within the image of $\vph$ and $\ga(1)=y$.
By truncating and reparametrising the curve, and adjusting $y$, 
we may assume that $\ga(s)$ lies in the image of $\vph$ 
precisely for $s\in [0,1)$ and $y=\ga(1)$.
Therefore there must exist a smooth curve $\si:[0,1)\to\n$ such
that $\vph(\si(s))=\ga(s)$ for all $s\in [0,1)$ 
and which converges to infinity
in the sense that for every compact subset $\Om$ of $\n$,
we have $\si(s)\notin\Om$ for $s\in [0,1)$ sufficiently 
close to $1$.
In particular, the curve $\si$ must have infinite length
with respect to $g_0$. By Definition \ref{initconddef},
for any $M>0$, the length of $\ga$ with respect to $g(t)$
must then be at least
$M$ for $t>0$ sufficiently small depending on $M$.

However, the curve $\ga$ 
has some finite length with respect to each of the metrics
$g(t)$, and by virtue of the uniform curvature bound, these lengths
are uniformly bounded above by some number $L$, say
(see for example \cite[Lemma 5.3.2]{RFnotes}).
This is a contradiction, and we have concluded that $\vph$
must be a diffeomorphism.

The fact that $g(t)$ can be extended smoothly down to $t=0$
then follows directly from Definition \ref{initconddef}.
\end{proof}

\begin{proof} (Corollary \ref{uniquenesscor}.)
By Theorem \ref{noncollapsedthm}, both of the Ricci flows
$g_1(t)$ and $g_2(t)$ can be extended to $t=0$ and then
have uniformly bounded curvature over some nonempty time 
interval $[0,\de]$.
If we let $\vph_1:\n\to\m_1$ and $\vph_2:\n\to\m_2$
be the maps from Definition \ref{initconddef} corresponding to
$g_1(t)$ and $g_2(t)$ respectively -- which are diffeomorphisms
in this case -- then $g_0=\vph_1^*(g_1(0))=\vph_2^*(g_2(0))$,
and so $\psi:=\vph_2\circ (\vph_1^{-1})$ is an isometry
from $(\m_1,g_1(0))$ to $(\m_2,g_2(0))$.
Thus $g_1(t)$ and $\psi^*(g_2(t))$ are both complete 
bounded-curvature Ricci flows, for $t\in [0,\de]$, 
which agree at $t=0$ and are thus identical by the uniqueness
result of Chen-Zhu \cite{chenzhu}, or (more simply in this
two-dimensional situation) by the uniqueness implied by
\cite[Theorem 4.2]{GT1}.
\end{proof}

\section{Flows contracting cusps}
\label{fccsect}

\subsection{Metrics on the punctured disc}
\label{PDsect}

We will require some asymptotic information about metrics
on the two-dimensional punctured disc 
$D\backslash\{0\}$ which are complete with negative 
curvature near the puncture.

We will be working on $D\backslash\{0\}$ either with respect
to the standard complex coordinate $z=x+iy$, sometimes appealing
to the corresponding standard polar coordinates $(r,\th)$, 
or with respect to the cylindrical coordinates $(s,\th)$,
where $s=-\ln r$. Note that $(s,\th)$ coordinates
are conformally equivalent to the original $(x,y)$ coordinates,
and changing coordinates $(x,y)$ to $(s,\th)$ changes the 
conformal factor according to
\begin{equation}
\label{changeofconffactor}
|\dz|^2=dx^2+dy^2=r^2(ds^2+d\th^2).
\end{equation}

With this notation, the complete conformal 
hyperbolic metric on $D\backslash\{0\}$
can be written $e^{2v}(ds^2+d\th^2)$ where $v=-\ln s$ (for
$s>0$).

\cmt{check $H$ below because we corrected it}

\begin{lemma}
\label{upperbdlemma}
If $g_0=e^{2a}|\dz|^2$ is any smooth conformal metric on the punctured
disc $D\backslash\{0\}$ with Gauss curvature bounded above by 
$-1$ (with $g_0$ not necessarily complete) and 
$H=[r\ln r]^{-2}|\dz|^2$ is the complete conformal hyperbolic metric on
$D\backslash\{0\}$, then $g_0\leq H$, or equivalently 
\begin{equation}
\label{firstabd}
a\leq -ln[r(-\ln r)].
\end{equation}
Moreover, if $g(t)=e^{2u(t)}|\dz|^2$ is any smooth Ricci flow on
$D$ ($t\in [0,T]$) with $g(0)\leq g_0$ 
then 
\begin{equation}
\label{RFupperbd}
u\leq -ln[r(-\ln r)]+\half\ln (1+2t).
\end{equation}
\end{lemma}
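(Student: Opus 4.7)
The plan is to prove both inequalities by maximum-principle comparison with the complete hyperbolic metric (respectively its Ricci-flow evolution), working on the sub-disks $D_\rho := \{|z|<\rho\}$ for $\rho\in(0,1)$ and then letting $\rho\upto 1$. The central observation for the dynamic part is that $\bar g(t) := (1+2t)H$ is itself a Ricci flow on $D\setminus\{0\}$: since $K[\bar g(t)] = -1/(1+2t)$, one has $\partial_t\bar g = 2H = -2K[\bar g]\,\bar g$, and the conformal factor of $\bar g(t)$ is exactly the right-hand side of \eqref{RFupperbd}.

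For the static inequality \eqref{firstabd}, I fix $\rho\in(0,1)$ and let $H_\rho = e^{2a_\rho}|\dz|^2$ be the complete conformal hyperbolic metric on $D_\rho\setminus\{0\}$; conformal rescaling $z\mapsto z/\rho$ yields $a_\rho(r) = -\ln[r\ln(\rho/r)]$, which blows up both as $r\downto 0$ and as $r\upto \rho$. The difference $w := a - a_\rho$ therefore tends to $-\infty$ at both ends of $D_\rho\setminus\{0\}$, so $\sup w$ is finite and attained at an interior point. There $\Delta w \leq 0$; but the identities $\Delta a = -K_{g_0}e^{2a}\geq e^{2a}$ (from $K_{g_0}\leq -1$) and $\Delta a_\rho = e^{2a_\rho}$ (from $K_{H_\rho}=-1$) together give $\Delta w \geq e^{2a_\rho}(e^{2w}-1)$, which is strictly positive whenever $w > 0$. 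Hence $\sup w \leq 0$, i.e.\ $a\leq a_\rho$ on $D_\rho\setminus\{0\}$, and letting $\rho\upto 1$ gives \eqref{firstabd} since $a_\rho \to -\ln[r(-\ln r)]$ pointwise.

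For the dynamic inequality \eqref{RFupperbd}, set $\bar u_\rho(r,t) := a_\rho(r) + \tfrac12\ln(1+2t)$, the conformal factor of the Ricci flow $(1+2t)H_\rho$ on $D_\rho\setminus\{0\}$, and apply the parabolic maximum principle to $w_\rho := u - \bar u_\rho$ on $(D_\rho\setminus\{0\})\times[0,T]$. At $t=0$, the hypothesis $g(0)\leq g_0$ combined with the static part gives $u(0)\leq a\leq a_\rho$, so $w_\rho(\cdot,0)\leq 0$. Since $u$ is bounded on the compact set $\bar D_\rho\times[0,T]\subset D\times[0,T]$ while $\bar u_\rho\to +\infty$ at both $r=0$ and $r=\rho$, the supremum of $w_\rho$ is attained either at $t=0$ or at some interior $(z_*,t_*)$ with $t_*>0$. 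Assume the latter with the supremum positive; then $\partial_t u = -K[g]$ and $\partial_t\bar u_\rho = 1/(1+2t)$ give $\partial_t w_\rho = -K[g] - 1/(1+2t)$ at $(z_*,t_*)$, while $\Delta w_\rho \leq 0$ combined with $\Delta u = -K[g]e^{2u}$ and $\Delta\bar u_\rho = e^{2\bar u_\rho}/(1+2t)$ yields $-K[g] \leq e^{-2w_\rho(z_*,t_*)}/(1+2t_*) < 1/(1+2t_*)$, whence $\partial_t w_\rho < 0$ --- contradicting the time-maximum condition $\partial_t w_\rho\geq 0$. So $w_\rho\leq 0$, and letting $\rho\upto 1$ delivers \eqref{RFupperbd}.

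The main obstacle is the non-compact boundary behavior: because the target conformal factor $\bar u$ is infinite at both ends of $D\setminus\{0\}$, the maximum principle cannot be applied directly on all of $D\times[0,T]$. The localization to $D_\rho$ --- together with the explicit conformal formula for $a_\rho$, its pointwise convergence to $a_H$, and the use of the static part as the initial comparison at $t=0$ --- is the device that forces the supremum to be attained in the interior and thereby makes the contradiction argument go through cleanly.
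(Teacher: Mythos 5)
Your overall strategy is the same as the paper's: compare against complete hyperbolic metrics on an exhausting family of subdomains, run a maximum-principle/sweeping argument, and use that $(1+2t)H$ is itself a Ricci flow to get the dynamic bound. But your choice of subdomains introduces a genuine gap. You work on the \emph{punctured} discs $D_\rho\setminus\{0\}$ and assert that $w=a-a_\rho$ tends to $-\infty$ at \emph{both} ends. That is justified at $r\upto\rho$ (where $a$ is smooth up to the boundary and $a_\rho\to+\infty$), but not at $r\downto 0$: the hypothesis only makes $a$ smooth on the punctured disc, so $a$ may be unbounded at the puncture --- and in the cases the lemma is designed for it is. For instance, if $g_0=H$ itself, then $a-a_\rho=\ln\bigl[\ln(\rho/r)/(-\ln r)\bigr]\to 0$ as $r\downto 0$, so the supremum of $w$ is $0$, approached only at the puncture and never attained at an interior point; your elliptic argument, which hinges on an interior maximum with $\lap w\leq 0$, never gets started. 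Without some a priori control on how fast $a$ can blow up at $0$ (which is essentially what you are trying to prove), no comparison on a domain whose ``boundary'' includes the puncture can close. The dynamic part then inherits this gap, since it uses the static inequality $u(\cdot,0)\leq a\leq a_\rho$ to control the $t=0$ face of the parabolic boundary (the rest of your parabolic argument is fine, because $u$ is smooth on all of $D$).

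The repair is exactly the paper's device: exhaust the punctured disc by compact annuli that \emph{avoid} the puncture, and compare with the complete hyperbolic metric of each annulus. In cylindrical coordinates $s=-\ln r$ these are the conformal factors $v_\de=-\ln[\sin(\de(s-\de))/\de]$ on $(\de,\pi/\de+\de)\times S^1$; the competitor is automatically bounded on the closed annulus while $v_\de\to+\infty$ on both boundary circles, so the sliding argument (shift $v_\de$ up by $M$ and decrease $M$ to $0$) applies, and $v_\de\to -\ln s$ as $\de\downto 0$ recovers the cusp barrier. If you substitute these annuli for your $D_\rho\setminus\{0\}$, both your static and dynamic arguments go through essentially as you wrote them.
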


\cmt{we're in middle of changing notation from $a$ to $u$ 
since we're
working on the whole disc in the second part.
Not yet introduced $u_0$.}

\begin{proof}
With respect to $(s,\th)$ coordinates as introduced at the 
start of Section \ref{PDsect}, the conformal factor
$$v_0:=-\ln s$$
gives rise to the complete hyperbolic metric on $(0,\infty)\times S^1$.
Moreover, for $\de>0$ the conformal factor
$$v_\de:=-\ln\left[\frac{\sin(\de(s-\de))}{\de}\right]$$
defines the complete hyperbolic metric over the range 
$s\in I_\de:=(\de,\frac{\pi}{\de}+\de)$.
It is elementary to see that this conformal factor must
be pointwise at least as large as the conformal factor $w$ of any
other conformal metric on $\overline{I_\de}\times S^1$ 
with Gauss curvature
no higher than $-1$.
Indeed, for sufficiently large $0<M<\infty$, we must have
$v_\de+M>w$ (since the right-hand side is bounded and $v_\de$
is bounded below) 
and then we can reduce $M>0$ continuously 
without this condition failing until possibly at $M=0$
since if it suddenly failed for $M>0$ at some point $p$,
then $v_\de-w+M$ would be a weakly positive function with
a zero at $p$ but with strictly negative Laplacian at $p$:
\begin{equation*}
\begin{aligned}
\lap (v_\de-w+M)&=-e^{2v_\de}K[e^{2v_\de}|\dz|^2]
+e^{2w}K[e^{2w}|\dz|^2]\\
& \leq  e^{2v_\de}-e^{2w}=e^{2w}(e^{-2M}-1)\\
&<0
\end{aligned}
\end{equation*}
which is a contradiction.

Thus 
$$w\leq v_\de\to v_0 \qquad\text{ as }\de\downto 0,$$
and returning from $(s,\th)$ to $(x,y)$ coordinates, keeping
in mind \eqref{changeofconffactor}, we deduce the first part
\eqref{firstabd} of the lemma.

For the second part of the lemma, note that 
the function $v_\de+\half\ln (1+2t)$ is the conformal 
factor of a Ricci flow on $I_\de\times S^1$
which starts at $t=0$ above $v_0$, and hence above any
conformal factor $w$ as above. 
By the maximum principle, $v_\de+\half\ln (1+2t)$ must then
lie above any conformal factor on $D$ which represents a 
Ricci flow and which starts below $w$ at $t=0$.
Letting $\de\downto 0$ then yields \eqref{RFupperbd}.
\end{proof}

We now turn to the subtler issue of lower bounds for 
conformal factors of metrics $g_0$ as in Lemma \ref{upperbdlemma}.

\begin{lemma}
\label{lowerbdlemma}
Suppose $g_0=e^{2a}|\dz|^2$ is a smooth conformal metric on the 
punctured disc $D\backslash\{0\}$ with Gauss curvature bounded 
within some interval $[-M,-1]$ and with $g_0$ complete at the origin.
Denoting the complete conformal hyperbolic metric on
$D\backslash\{0\}$ by $H=e^{2v}|\dz|^2$,
where $v=-\ln [-r\ln r]$ as above, we have
\begin{equation}
\label{lowerabd}
a-v\geq -C
\end{equation}
for some $C<\infty$ (depending on $g_0$) and any $r\in (0,\half)$,
and in particular, $a\to \infty$ as $r\downto 0$.
\end{lemma}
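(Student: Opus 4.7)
The plan is to pass to cylindrical coordinates $(s,\theta)=(-\ln r,\theta)$, write $g_0 = e^{2w}(ds^2+d\theta^2)$ with $w(s,\theta):=a(r,\theta)-s$, so that the hyperbolic conformal factor becomes $v_H(s)=-\ln s$ and the desired bound $a\geq v-C$ translates into a lower bound $\Psi\geq -C$ for
\[
\Psi(s,\theta) \;:=\; w(s,\theta)+\ln s
\]
on $(\ln 2,\infty)\times S^1$. The identity $\Delta w=-e^{2w}K[g_0]$ (with the flat cylindrical Laplacian) combined with $K[g_0]\geq -M$ gives $\Delta w\leq Me^{2w}$. Using $\Delta\ln s=-1/s^2$ and $e^{2w}=e^{2\Psi}/s^2$ this yields
\[
\Delta\Psi \;\leq\; \frac{Me^{2\Psi}-1}{s^2}.
\]
At any interior local minimum of $\Psi$ one has $\Delta\Psi\geq 0$, which forces $Me^{2\Psi}\geq 1$ and hence $\Psi\geq-\tfrac12\ln M$ at that point. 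Together with the obvious bound $\Psi\geq -C_1(g_0)$ on the inner boundary circle $\{s=\ln 2\}$ coming from the smoothness of $g_0$ there, this controls $\Psi$ at interior critical points and on the inner boundary.

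To rule out $\Psi\to-\infty$ at the cusp $s\to\infty$ I plan to compare $w$ with the family of barriers $\phi_c(s):=-\ln(s-c)-\tfrac12\ln M$, which are the conformal factors of the complete constant curvature $-M$ metrics on $(c,\infty)\times S^1$. Wherever $\phi_c>w$,
\[
\Delta(\phi_c-w) \;\geq\; M\bigl(e^{2\phi_c}-e^{2w}\bigr) \;>\; 0,
\]
so $\phi_c-w$ admits no positive interior maximum. Choose $c<\ln 2$ negative enough that $\phi_c(\ln 2)\leq\min_\theta w(\ln 2,\theta)$ (possible since $\phi_c(\ln 2)\downto-\infty$ as $c\downto-\infty$); the goal is to establish $w\geq\phi_c$ on the whole of $(\ln 2,\infty)\times S^1$, which would give $\Psi(s,\theta)\geq\ln s-\ln(s-c)-\tfrac12\ln M\geq -C_2(c,M)$ for $s\geq\ln 2$ and hence, combining with the previous paragraph, $\Psi\geq -C$ for $C=\max(\tfrac12\ln M,C_1,C_2)$. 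Since $v\to\infty$ as $r\downto 0$, this also forces $a\to\infty$ at the puncture.

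The hard part is justifying the comparison $w\geq\phi_c$ on the noncompact half-cylinder: a naive maximum-principle application is inadmissible because $w$ is a priori only bounded above (by Lemma \ref{upperbdlemma}, $w\leq-\ln s$) and might be arbitrarily negative somewhere, so $\phi_c-w$ is not a priori bounded and its supremum could in principle be approached only in a limit $s\to\infty$. The plan is a compact exhaustion by annuli $(\ln 2, S_n)\times S^1$: at the outer boundary $\{s=S_n\}$ I install an auxiliary barrier coming from the complete constant curvature $-M$ metric on the finite annulus, namely an explicit $\sin$-type conformal factor analogous to the $v_\delta$ appearing in the proof of Lemma \ref{upperbdlemma}, which blows up at both boundaries and hence trivially dominates $w$ near $\{s=S_n\}$. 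The maximum principle then applies on each closed annulus, and letting $S_n\to\infty$ the auxiliary barriers degenerate to $\phi_c$, delivering the global comparison. The interior bound $\Psi\geq-\tfrac12\ln M$ from the first paragraph is what guarantees that this limit is well-controlled rather than running off to $-\infty$.
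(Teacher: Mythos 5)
There is a genuine gap at the decisive step, namely the global comparison $w\geq\phi_c$ on the noncompact half-cylinder. Your exhaustion argument has the inequality pointing the wrong way at the artificial outer boundary: the $\sin$-type barriers $\psi_n$ of constant curvature $-M$ on the finite cylinders satisfy $\psi_n\geq\phi_c$ and blow up to $+\infty$ at $s=S_n$, so they \emph{dominate} $w$ there — but domination from above is exactly useless for propagating a \emph{lower} bound on $w$. Concretely, $\psi_n-w$ (like $\phi_c-w$) is subharmonic where positive, so the maximum principle gives $\sup(\psi_n-w)_+\leq\sup_{\partial}(\psi_n-w)_+$; since the right-hand side is $+\infty$ on $\{s=S_n\}$, no conclusion follows. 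To make the exhaustion close you would need barriers lying \emph{below} $w$ near $s=S_n$ which do not degenerate to $-\infty$ in the limit, and no such barriers can exist on the strength of your hypotheses as used: nowhere in your plan is the completeness of $g_0$ at the puncture actually invoked. That hypothesis is indispensable — the restriction to $D\backslash\{0\}$ of a smooth metric on $D$ with curvature in $[-M,-1]$ satisfies everything else but has bounded $a$, so $a-v\to-\infty$. Your first paragraph does not rescue this either, since $\Psi=w+\ln s$ need not attain an interior minimum on the noncompact cylinder; it could decrease monotonically to $-\infty$ as $s\to\infty$, precisely the scenario you must exclude.

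What you are really trying to reprove by hand is Yau's Schwarz lemma, whose proof is where completeness enters (via the Omori--Yau generalized maximum principle, or an exhaustion that uses completeness to control the boundary terms). The paper sidesteps the noncompact maximum principle entirely: it glues $g_0$ near the puncture to the hyperbolic metric $H$ near $\partial D$ by a cutoff in the conformal factor, $\al=\vph a+(1-\vph)v$, obtaining a metric $\Om$ that is \emph{complete on all of} $D\backslash\{0\}$ with $K[\Om]\geq-\be$ for some finite $\be$, and then quotes Yau's Schwarz lemma to get $H\leq\be\,\Om$, hence $v\leq\al+\half\ln\be=a+\half\ln\be$ on $D_{\half}$. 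If you want to avoid citing Yau, you must either import his argument (using completeness of $g_0$ at the cusp to justify the maximum principle at infinity) or find boundary data for your exhaustion that genuinely sits below $w$; as written, the proof does not go through.
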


To clarify, by \emph{complete at the origin} we mean that $g_0$
restricted to, say, $\overline{D_\half}\backslash\{0\}$ should
be a complete manifold with boundary.

\begin{proof} (cf. \cite{GT2}.)
Choose any cut-off function $\vph\in C_c^\infty(D_\frac{3}{4},[0,1])$
with $\vph\equiv 1$ on $D_\half$, and consider the metric
$\Om=e^{2\al}|\dz|^2$ defined by
$$\al=\vph a+(1-\vph)v.$$
For $r\in (\frac{3}{4},1)$, we have $\Om=H$, and so $K[\Om]=-1$.
For $r\in (0,\half)$, we have $\Om=g_0$, and so $K[\Om]\geq -M$.
In the remaining \emph{compact} region $\half \leq r \leq \frac{3}{4}$,
the curvature $K[\Om]$ has \emph{some} lower bound, and thus there 
exists $\be\leq \infty$ such that 
$$K[\Om]\geq -\be$$
throughout $D\backslash\{0\}$. Since $\Om$ is clearly complete,
we may apply Yau's Schwarz lemma (see \cite{Yau73} and
\cite[Theorem 2.3]{GT1}) to deduce that
$H\leq \be e^{2\al}|\dz|^2$, or equivalently
$$v\leq \half\ln\be+\al.$$
Since 
$\al=a$
on $D_\half$, the lemma is proved with 
$C=\half\ln\be$.
\end{proof}

\subsection{Spherical upper barriers}

In this section we consider Ricci flows on the disc
which begin at a metric as considered in Lemma \ref{upperbdlemma}.
The goal is to exploit the estimates from the previous 
section in order to construct an upper barrier which gives 
decay of the conformal factor like $1/t$.

\begin{lemma}
\label{flowupperbdlemma}
If $g_0=e^{2a}|\dz|^2$ is any smooth conformal metric on the punctured
disc $D\backslash\{0\}$ with Gauss curvature bounded above by 
$-1$ (with $g_0$ not necessarily complete) and 
$g(t)=e^{2u(t)}|\dz|^2$ is any smooth Ricci flow on
$D$ ($t\in [0,T]$) with $g(0)\leq g_0$ 
then there exists $\be<\infty$ universal such that
\begin{equation}
u\leq \frac{\be}{t}
\end{equation}
for $r\leq \half$ and $0<t < \min\{1,T\}$.
\end{lemma}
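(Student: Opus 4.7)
My plan is to apply the parabolic maximum principle on $D_{1/2}\times(0,\min\{1,T\}]$ with an explicit rotationally symmetric upper barrier of spherical type, using Lemma~\ref{upperbdlemma} to control the lateral boundary.

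First I would use Lemma~\ref{upperbdlemma} restricted to the circle $r=\tfrac12$ to obtain a universal upper bound $u(z,t)\leq C_{0}:=-\ln[\tfrac{1}{2}\ln 2]+\tfrac{1}{2}\ln 3$ for $t\in[0,1\wedge T]$, controlling $u$ on the lateral part of the parabolic boundary of $D_{1/2}$. I would then construct a barrier of the form $W(r,t)=A(t)-\ln(r^{2}+b(t))$, representing the conformal factor of a family of round spheres pulled back to the disc via stereographic projection. A direct computation yields the convenient identity $e^{-2W}\Delta W=-4b(t)e^{-2A(t)}$, independent of $r$, so the supersolution inequality $\partial_{t}W\geq e^{-2W}\Delta W$ reduces to the pointwise-in-$r$ condition
\[
\dot A\;-\;\frac{\dot b}{r^{2}+b}\;\geq\;-4be^{-2A},
\]
which becomes an equality (at $r=0$) precisely for the exact Ricci flow of a shrinking sphere, namely $b$ constant and $e^{2A}$ decaying linearly.

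The parameters $A(t)$ and $b(t)$ would be chosen so that $W(\tfrac12,t)\geq C_{0}$ (giving $W\geq u$ on the outer circle), $W(r,0)\geq -\ln[r(-\ln r)]$ (giving $W\geq u$ initially by Lemma~\ref{upperbdlemma}, which forces $b(0)=0$ because the cuspidal initial data can blow up near the origin), and the supersolution inequality holds on $D_{1/2}\times(0,T\wedge 1]$. Maximum principle comparison would then be carried out on the annulus $\{\epsilon\leq r\leq\tfrac12\}\times[0,T\wedge 1]$ for arbitrary small $\epsilon>0$: the outer circle is handled by the boundary choice; the inner circle by the divergence $W(r,0)\to+\infty$ as $r\downto 0$ combined with the at-most-logarithmic bound $u(\epsilon,t)\leq -\ln[\epsilon(-\ln\epsilon)]+\tfrac12\ln 3$ from Lemma~\ref{upperbdlemma}; and the initial slice by the matching condition at $t=0$. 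After $\epsilon\downto 0$ and using smoothness of $u$ on $\overline{D_{1/2}}$ at $t>0$, I obtain $u\leq W$, and since $W$ is radially decreasing this yields $u(z,t)\leq W(0,t)=A(t)-\ln b(t)$ for all $|z|\leq\tfrac12$.

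The main technical difficulty is the ODE analysis for $A(t),b(t)$: the conditions $b(0)=0$, $A(t)\geq C_{0}+\ln(\tfrac14+b(t))$ (forced by the outer-boundary bound) and the supersolution inequality must be simultaneously reconciled, and the resulting quantity $A(t)-\ln b(t)$ must be shown to satisfy $\leq\beta/t$ as $t\downto 0$ for some universal $\beta$. I expect this to be the central calculation of the proof, carried out by taking $b(t)$ to grow from $0$ at precisely the rate dictated by the marginal case of the supersolution condition at $r=0$, namely $\dot A-\dot b/b=-4be^{-2A}$, whose integration should produce the claimed $\beta/t$ blow-up rate of the barrier at the origin.
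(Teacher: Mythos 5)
Your overall strategy --- an expanding spherical upper barrier $W(r,t)=A(t)-\ln(r^2+b(t))$, the identity $e^{-2W}\lap W=-4b\,e^{-2A}$, and Lemma \ref{upperbdlemma} to anchor the comparison --- is exactly the mechanism of the paper's proof. But the implementation you propose, a parabolic comparison on the fixed annulus $\{\ep\leq r\leq\half\}$ with the pure sphere required to dominate $u$ all the way out to $r=\half$, cannot be carried out: the three constraints you list on $(A,b)$ are mutually incompatible, so the ``central calculation'' you defer has no solution. Indeed, your outer-boundary requirement $A(t)\geq C_0+\ln(\tfrac14+b(t))$ forces $A\geq A_{\min}:=C_0-\ln 4$. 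Writing $w(t):=W(0,t)=A-\ln b$, the supersolution inequality at $r=0$ (which you need in the limit $\ep\downto 0$, since $W$ is fixed) reads $\dot w\geq -4be^{-2A}=-4e^{-A}e^{-w}$, i.e.\ $\frac{d}{dt}e^{w}\geq -4e^{-A}\geq -4e^{-A_{\min}}$. Integrating from $s$ to $t$ and letting $s\downto 0$, the condition $b(0)=0$ gives $e^{w(s)}\to\infty$, hence $e^{w(t)}=\infty$ for every $t$ --- contradicting the very bound $w(t)\leq\be/t<\infty$ you are trying to extract. (A second symptom of the same problem is the inner lateral boundary: once $b(t)\gg\ep^2$ one has $W(\ep,t)\approx A(t)-\ln b(t)=O(1/t)$, independent of $\ep$, whereas the only universal control on $u(\ep,t)$ is Lemma \ref{upperbdlemma}'s bound $\approx-\ln\ep$, which exceeds any fixed constant for small $\ep$; so $W\geq u$ cannot be verified there for $t$ of order one.)

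The paper's resolution is to abandon the fixed outer circle: the spherical barrier is used only on the moving region $\{r<\la(t)\}$ with $\la(t)=e^{-6/t}$ (i.e.\ $b=\la^2$), and is glued at $r=\la(t)$ to the \emph{static} profile $h(r)=-\ln[r(-\ln r)]+\half\ln 3$, which strictly dominates $u$ everywhere by Lemma \ref{upperbdlemma}; the comparison is then a first-touching-time argument in which any touching point is forced into the interior of $\{r<\la(t)\}$, where the strict supersolution inequality $\partial_t S-e^{-2S}\lap S\geq 6/t^2>0$ gives the contradiction. Crucially, this frees $A(t)$ to tend to $-\infty$ like $-6/t$ (equivalently, the barrier spheres have curvature blowing up like $12/t^2$), which is exactly what the integration above shows is necessary to bring $W(0,t)$ down from $+\infty$ to a finite value $\leq\be/t$. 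So your barrier family and your supersolution computation are the right ones, but you must replace the fixed-annulus comparison by the glued barrier $U=\min$-type construction (or an equivalent moving-boundary argument); as stated, your plan fails at the step you identified as the main technical difficulty.
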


The function $s:\R^2\to\R$ defined in polar coordinates by
$$s(r):=\ln\frac{2}{1+r^2}$$
gives rise to the metric $e^{2s}|\dz|^2$ of the 
round (punctured) sphere.
One may also dilate this conformal factor to $s(\frac{r}{\lambda})$,
or add a constant, giving another spherical metric of a possibly
different curvature.
Under Ricci flow such a  conformal factor evolves simply by
shifting downwards - i.e. subtracting off a time-dependent
constant. For example, for any $\lambda>0$, one Ricci flow would
be given by the conformal factor
$$(r,t)\mapsto s\left(\frac{r}{\lambda}\right)-\ln \la + \half\ln (1-2t).$$

The idea in this section is to use these spherical metrics, 
appropriately restricted, as upper barriers for the Ricci flow
$g(t)$ of the lemma. Moreover, we evolve them not just by
Ricci flow (i.e. subtracting off a time-dependent constant)
but also by dilating within the domain. 
Whereas a Ricci flow would make the radius of a sphere
shrink like $\sqrt{C-t}$, our barriers will have a radius 
which is \emph{increasing} like $t$.

One difficulty with this approach is that one must take care
in any maximum principle argument about what is happening on
the boundary of the  domain on which one is working. This is
where the estimates of the previous section first come in.

\begin{proof} (Lemma \ref{flowupperbdlemma}.)
Without loss of generality, we may assume that $T\leq 1$.
Then by Lemma \ref{upperbdlemma}, for $t\in (0,T)$, 
we have the upper bound
$$u(r,t)< h(r):=-\ln[r(-\ln r)]+\half\ln 3$$
for $r\in (0,1)$.
On the other hand, we consider the function
$S:D\times (0,T)\to\R$ defined by
$$S(r,t):=s\left(\frac{r}{\lambda}\right)
-\ln [\la(-\ln \la)] + \half\ln 3$$
where $\la=\la(t):= e^{-\frac{6}{t}}$ will be motivated in a moment.
As mentioned above, $S(\cdot,t)$ represents
the conformal factor of part of some sphere for each $t$.
Note that
$$S(\la,t)=h(\la),$$
and so we can define a continuous function $U:D\times (0,T)\to\R$ by
$$U(r,t)=\left\{
\begin{aligned} 
S(r,t) & \qquad 0\leq r < \la \\ 
h(r) & \qquad \la \leq r<1
\end{aligned}.
\right.
$$
{\bf Claim:} On the whole of $D\times (0,T)\to\R$ we have
$$u\leq U$$

\emph{Proof of Claim:}
For $0\leq r\leq \la$, we have 
$S(r,t)\geq -\ln [\la(-\ln \la)]\to \infty$ as $t\downto 0$.
Therefore for sufficiently small $t>0$ (depending on the flow
in question) we must have $u(r,t) < U(r,t)$ for all 
$r\in [0,1)$.

Now suppose at some first time $t_0\in (0,T)$ the function 
$U(\cdot,t_0)$ fails to be a strict upper barrier for $u(\cdot,t_0)$.
Then we can find $r_0\in (0,\la(t_0))$ 
such that $U(r_0,t_0)=u(r_0,t_0)$
even though $U(\cdot,t_0)\geq u(\cdot,t_0)$.
At $(r_0,t_0)$ we then have
$$\pl{(U-u)}{t}\leq 0;\qquad \lap (U-u)\geq 0,$$
but the Ricci flow equation \eqref{2DRFeq}
gives $\pl{u}{t}=e^{-2u}\lap u$,
and so 
\begin{equation}
\label{Uoneway}
\pl{U}{t}\leq e^{-2U}\lap U
\end{equation}
at $(r_0,t_0)$.
On the other hand, keeping in mind that $\la=e^{-\frac{6}{t}}$,
we have at $(r_0,t_0)$ that
\begin{equation*}
\begin{aligned}
\pl{U}{t}&=\pl{S}{t}=
\left[-\frac{r_0}{\la^2}s'\left(\frac{r_0}{\la}\right)
-\frac{1}{\la}-\frac{1}{\la\ln\la}\right]\frac{d\la}{dt}\\
&\geq -\frac{1}{\la}\frac{d\la}{dt}
= -\frac{6}{t^2}
\end{aligned}
\end{equation*}
and
$$e^{-2U}\lap U=e^{-2S}\lap S=-\frac{1}{3}(\ln \la)^2=
-\frac{12}{t^2},$$
and so
$$\pl{U}{t}- e^{-2U}\lap U\geq \frac{6}{t^2}>0,$$
contradicting \eqref{Uoneway} and proving the claim.

By inspection, the maximum of $U$ for $r\leq \half$
is achieved at the origin ($r=0$):
$$\sup_{r\leq \half}U=s(0)-\ln [\la(-\ln \la)] + \half\ln 3
=\ln 2+\frac{6}{t}-\ln(\frac{6}{t})+\half\ln 3
\leq \frac{C}{t}$$
for some universal $C$, since $t<1$.
By the claim above, we then have for $r\leq \half$ and 
$t\in (0,T)$
$$u\leq \frac{C}{t}$$
as desired.
\end{proof}


\subsection{Truncating cusps}

In this section we clarify how to take a cusp-like
metric on a punctured surface and smooth it out across the puncture
in a controlled manner.

\begin{lemma}
\label{trunclemma}
Suppose $g_0=e^{2a}|\dz|^2$ is a smooth complete 
conformal metric on the 
punctured closed disc $\overline{D}\backslash\{0\}$ 
with Gauss curvature bounded 
within some interval $[-M,-1]$.
Then there exists an increasing sequence  of smooth conformal metrics
$g_k=e^{2u_k}|\dz|^2$ on $D$ such that 
\begin{compactenum}[(i)]
\item
$g_k=g_0$ on $D\backslash D_{1/k}$;
\item
$g_k\leq g_0$ throughout $D\backslash\{0\}$;
\item
$\inf_{D_{1/k}}u_k \to\infty$ as $k\to\infty$;
\item
the Gauss curvatures of $g_k$ are uniformly bounded below
independently of $k$.
\end{compactenum}
\end{lemma}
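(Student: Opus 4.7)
The plan is to build each $u_k$ by keeping $a$ on $D\setminus D_{1/k}$ and patching in a smooth model cap inside a smaller disc, with a smooth transition in between. The key input is Lemma \ref{lowerbdlemma}: since $a(r,\theta)\geq v(r)-C_0$ with $v(r)=-\ln[-r\ln r]$, we have $a\to\infty$ uniformly at the puncture, and in particular $\inf_{r\leq 1/k}a\geq\ln k-\ln\ln k-C_0$. For each large $k$ I would choose a constant $T_k:=\ln k-\ln\ln k-C_0-1$ (so $T_k<\inf_{r\le 1/k}a$ and $T_k\to\infty$), a radius $\rho_k:=1/(2k)$, and a smooth cutoff $\chi_k\colon[0,\infty)\to[0,1]$ with $\chi_k\equiv 0$ on $[0,\rho_k]$, $\chi_k\equiv 1$ on $[1/k,\infty)$, and all derivatives vanishing at $r=1/k$ to guarantee smooth matching across $\partial D_{1/k}$. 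Then set
$$u_k(z):=\chi_k(|z|)\,a(z)+(1-\chi_k(|z|))\,T_k.$$

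With this definition, condition \emph{(i)} holds because $\chi_k=1$ on $D\setminus D_{1/k}$; condition \emph{(ii)} follows since $u_k=\chi_k a+(1-\chi_k)T_k\leq\max(a,T_k)=a$ on $D_{1/k}$; and condition \emph{(iii)} follows from $u_k\geq T_k\to\infty$ on $D_{1/k}$. Smoothness at the origin is clear because $u_k\equiv T_k$ on $D_{\rho_k}$. For the monotonicity claim, one chooses the sequence $T_k$ non-decreasing and takes all $\chi_k$ as rescalings of a common profile; then on each of the four regions $D_{\rho_{k+1}}$, $D_{\rho_k}\setminus D_{\rho_{k+1}}$, $D_{1/k}\setminus D_{\rho_k}$, and $D\setminus D_{1/k}$, a direct pointwise comparison gives $u_{k+1}\ge u_k$.

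The main difficulty is \emph{(iv)}. Expanding,
$$\Delta u_k=\chi_k\,\Delta a+2\nabla\chi_k\cdot\nabla a+(a-T_k)\,\Delta\chi_k,$$
one needs $\Delta u_k\leq Ce^{2u_k}$ with $C$ uniform in $k$. The term $\chi_k\Delta a$ is controlled by $|\Delta a|\leq Me^{2a}$, since $e^{2(a-u_k)}=e^{2(1-\chi_k)(a-T_k)}$ is bounded because $a-T_k$ is bounded on the transition annulus (combining Lemmas \ref{upperbdlemma} and \ref{lowerbdlemma}). In the transition annulus the terms involving $\nabla\chi_k$ and $\Delta\chi_k$ are each of size $O(k^2)$; the cross term $2\nabla\chi_k\cdot\nabla a$ is sign-definite, being \emph{negative} because $\chi_k'\geq 0$ while $\partial_r a<0$ (a cylinder-coordinate computation combined with the standard interior gradient estimate for the Liouville-type equation $-\Delta a=K[g_0]e^{2a}$), so it helps rather than hurts the required upper bound. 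The delicate term is $(a-T_k)\,\Delta\chi_k$, which is also $O(k^2)$ and can be positive. Closing the argument requires that the parameter choice (giving $|a-T_k|=O(1)$ and $e^{2u_k}\gtrsim k^2/(\ln k)^2$ on the transition annulus), together with the available negative contribution from the cross term, is enough to absorb this piece into $Ce^{2u_k}$; checking this cancellation in full, possibly with a further refinement of the shape of $\chi_k$, is the technical heart of the proof.
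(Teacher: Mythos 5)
Your construction has a genuine gap, and it is exactly the one you flag at the end. On the transition annulus $\rho_k\le |z|\le 1/k$ you have $e^{2u_k}\approx e^{2a}\approx [r(-\ln r)]^{-2}\approx k^2/(\ln k)^2$, while $(a-T_k)\,\Delta\chi_k=O(1)\cdot O(k^2)$; dividing, this term contributes $O\bigl((\ln k)^2\bigr)$ to the curvature, which is \emph{not} uniformly bounded. Your proposed rescue via the cross term $2\nabla\chi_k\cdot\nabla a$ does not work: $g_0$ is not assumed rotationally symmetric, and the two-sided bound $v-C\le a\le v$ from Lemmata \ref{upperbdlemma} and \ref{lowerbdlemma} gives no pointwise control on the sign (let alone a lower bound on the magnitude) of $\partial_r a$, so there is no reason for a pointwise cancellation against a positive $\Delta\chi_k$. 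Any truncation by a cutoff in the \emph{spatial} variable runs into this: you must differentiate the cutoff twice on a length scale $1/k$, and nothing in the hypotheses pays for that.

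The paper avoids the problem by truncating in the \emph{range} of the conformal factor instead of in the domain: take a concave $\psi$ with $\psi(s)=s$ for $s\le -1$, $\psi(s)=0$ for $s\ge 1$, $\psi''\le 0$, and set $u_k:=\psi(a-k)+k$ (a smoothed $\min(a,k)$). Then
\begin{equation*}
\Delta u_k=\psi'(a-k)\,\Delta a+\psi''(a-k)\,|\nabla a|^2\le \psi'(a-k)\,\Delta a\le \Delta a,
\end{equation*}
since $\psi''\le 0$, $\psi'\in[0,1]$ and $\Delta a=-e^{2a}K(g_0)\ge 0$; on the transition set one also has $u_k\ge a-1$, whence $K(g_k)\ge e^{2}K(g_0)\ge -e^{2}M$. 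No derivative of a spatial cutoff and no control on $\nabla a$ are needed. Smoothness across the origin and (iii) follow because $a\to\infty$ there (Lemma \ref{lowerbdlemma}) forces $u_k\equiv k$ near $0$; monotonicity in $k$ and (ii) follow from concavity of $\psi$; and (i) is arranged by passing to a subsequence, since $g_k=g_0$ only where $a\le k-1$, a set which need not contain $D\setminus D_{1/k}$ for the original indexing. If you want to salvage your approach you would have to replace the spatial cutoff by something of this type; as written, condition (iv) is not established.
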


\begin{proof}
Pick any smooth function $\psi:\R\to\R$ such that
\begin{compactenum}[(a)]
\item
$\psi(s)=s$ for $s\leq -1$;
\item
$\psi(s)=0$ for $s\geq 1$;
\item
$\psi''\leq 0$.
\end{compactenum}
The lemma is proved by taking 
a subsequence of the increasing sequence of metrics $g_k$ defined by
their conformal factors
$$u_k:=\psi(a-k)+k.$$
One should view these metrics as a smoothed out version of
the minimum of $g_0$ and the metric with constant conformal factor $k$.
Indeed, it is clear that $(ii)$ is satisfied.
Note that $u_k$ is identically equal to $k$ in a neighbourhood
of the origin because $a\to\infty$ at the origin,
according to Lemma \ref{lowerbdlemma}, and therefore the metrics
extend smoothly across the origin.
Assertion $(iii)$ also follows because $a\to\infty$ at the origin.

In order to see that the Gauss curvature of the metrics $g_k$
is uniformly controlled from below, we divide up into three cases:
\begin{compactenum}[(a)]
\item
Where $a\geq k+1$, we have $u_k\equiv k$, so $K(g_k)=0$;
\item
Where $a\leq k-1$, we have $u_k\equiv a$, so $K(g_k)=K(g_0)\geq -M$;
\item
Where $k-1< a<k+1$, we have $u_k\geq a-1$. We compute
$$\lap u_k=\psi'(a-k)\lap a + \psi''(a-k)|\grad a|^2\leq 
\psi'(a-k)\lap a\leq \lap a,$$
since $\psi''\leq 0$, $\psi'\in [0,1]$ and 
$\lap a=-e^{2a}K(g_0)\geq 0$. Therefore
$$K(g_k)=-e^{-2u_k}\lap u_k\geq -e^{-2u_k}\lap a
\geq -e^{-2(a-1)}\lap a=e^2K(g_0)\geq -e^2M.$$
\end{compactenum}
Finally, Assertion $(i)$ can be guaranteed after passing to
an appropriate subsequence.
\end{proof}

\subsection{Proof of Theorem \ref{cuspcontractthm}}

We now combine the supporting results we have compiled in
Section \ref{fccsect}
into a proof of the existence of the Ricci flows claimed
in Theorem \ref{cuspcontractthm}.

For simplicity of notation, we restrict our discussion
to the case that $n=1$ -- that is, there is a single 
puncture $p$ on $\m$ -- although the proof of the general
case will then be an obvious extension.

We begin by taking isothermal coordinates $x$ and $y$ in 
a neighbourhood of $p$. By scaling and translating them, 
we may assume that $p$ corresponds to $x=y=0$, that
the coordinates exist for $z=x+iy$ within a domain containing 
the closure of the unit disc $D\subset\C$,
and that the supremum of the curvature of $g_0$ in 
$D\backslash \{0\}$ is 
strictly negative.
By scaling the metric itself, we may then assume without loss
of generality that the
Gauss curvature of $g_0$ is less than $-1$ within $D\backslash \{0\}$.

By truncating $g_0$ within $D$ using Lemma \ref{trunclemma}, 
we can find an increasing sequence  of smooth conformal metrics
$g_k$ on $\m$ such that 
\begin{compactenum}[(i)]
\item
$g_k=g_0$ on $\m\backslash\{p\}$ 
outside the shrinking neighbourhood $D_{1/k}$ of $p$;
\item
$g_k\leq g_0$ throughout $\m\backslash\{p\}$;
\item
near $p$, the conformal factors $u_k$ of $g_k$ satisfy 
$\inf_{D_{1/k}}u_k \to\infty$ as $k\to\infty$;
\item
the Gauss curvatures of $g_k$ are uniformly bounded below
independently of $k$.
\end{compactenum}
We now flow each of the smooth metrics $g_k$ under Ricci flow
in order to give a time-dependent flow $g_k(t)$. Ricci flow 
theory in two dimensions due to Hamilton and Chow \cite{chowknopf}
tells us that the flows exist for all time if the genus of
$\m$ is at least one, while if the genus is zero, then 
the existence time is equal to the area of $(\m,g_k)$ divided
by $8\pi$. In particular, the existence time is increasing with
$k$ since the areas of $(\m,g_k)$ are increasing with $k$, and
we may pick some uniform 
$T>0$ so that all of these flows exist for $t\in [0,T]$.

The maximum principle applied to conformal factors
tells us that because the metrics $g_k(0)$
are increasing with $k$, so are the metrics $g_k(t)$ for each 
$t\in [0,T]$. Also, the maximum principle applied to 
curvatures, and condition (iv) above, tell us that the Gauss curvature of the flows $g_k(t)$ is uniformly bounded below
independently of $k$ and $t$.

We also want to consider Shi's complete bounded-curvature 
Ricci flow $g_{s}(t)$ on $\m\backslash\{p\}$ with $g_{s}(0)=g_0$.
Within $D\backslash \{0\}$, the conformal factor of $g_0$ 
is converging to infinity
at the puncture (this is implicit above and follows from 
Lemma \ref{lowerbdlemma}). 
Therefore, working directly from the Ricci flow equation
\eqref{2DRFeq}
and using the fact that the curvature is bounded above, we see
that the conformal factor of each of the metrics $g_s(t)$
must also converge to infinity at the puncture.
This allows us to apply the maximum principle to compare 
each $g_k(t)$ with $g_s(t)$, and we conclude that 
for each $t\in [0,T]$,
\begin{equation}
\label{gkgs}
g_k(t)\leq g_{k+1}(t)\leq g_s(t)
\end{equation}
throughout $\m\backslash \{p\}$.

This estimate gives good control on the approximating flows
$g_k(t)$ away from $p$, all the way down to $t=0$. 
In particular, it allows us to define a flow 
\begin{equation}
\label{Gdef}
G(t)=\lim_{k\to\infty} g_k(t)
\end{equation}
on $\m\backslash\{p\}$ for $t\in [0,T]$, and when we take any conformal
chart not containing $p$, we see that the conformal factors
of $g_k(t)$ are locally uniformly bounded, independently of $k$
(since $g_k(t)$ is sandwiched between $g_1(t)$ and $g_s(t)$)
and so we may apply standard parabolic regularity theory
to get local uniform bounds on their derivatives.
Therefore, we deduce that $G(t)$ is a smooth Ricci flow
on $\m\backslash\{p\}$ for $t\in [0,T]$.

We propose that $G(t)$ extends to be the flow whose existence
is asserted in the theorem.
Certainly $G(t)\to g_0$ locally on $\m\backslash\{p\}$ as $t\downto 0$.
However, at this point it is unclear whether for $t>0$, 
$G(t)$ extends smoothly
across $p$. Indeed, the truncated cusps
within the metrics $g_k(t)$ might take longer and longer to
contract as $k\to\infty$, and $G(t)$ might then 
coincide with Shi's flow $g_s(t)$, for example. 
In order to show that the truncated cusps
within the metrics $g_k(t)$ contract at a rate which is
independent of $k$, we apply Lemma 
\ref{flowupperbdlemma} to each flow $g_k(t)$ 
restricted to $D$.

This gives us uniform control from above on the metrics
$g_k(t)$ throughout $\m$, on any closed time interval 
within $(0,T]$, independently of $k$.
Therefore for $t>0$, we can extend the definition
\eqref{Gdef} to the whole of $\m$. Moreover, this uniform
control allows us to apply the same standard parabolic regularity 
theory as above to get local uniform bounds on the derivatives
of $g_k(t)$ across $p$, and we conclude that $G(t)$ is
a smooth Ricci flow on the whole of $\m$ for $t>0$.

We now see that the Ricci flow $(\m,G(t))$ for $t\in (0,T]$
has $(\m\backslash\{p\},g_0)$ as initial metric in the 
sense of Definition \ref{initconddef}, with the map $\vph$ of the
definition equal to the obvious inclusion.

This completes the proof of the first part of the theorem.
It remains to argue that the diameter of $(\m,G(t))$ decays
precisely logarithmically in $t$ as in \eqref{logdecay}.
Since the flow is only singular at $p$, it suffices to argue that 
the distance from $p$ ($z=0$ in the chart considered above) 
to any other fixed point in $\m$ (say the point $z=1/2$) decays
in this logarithmic fashion.

\emph{Upper bound:} Because the flow $G(t)$ above arose as a limit
of flows $g_k(t)$, it suffices to prove a logarithmic upper bound
for the distance from $z=0$ to $z=1/2$ in the flows $(\m,g_k(t))$ provided that it is independent of $k$.
Taking $\be$ from Lemma \ref{flowupperbdlemma}, and exploiting
that result together with Lemma \ref{upperbdlemma}
we compute
\begin{equation}
\begin{aligned}
dist_{g_k(t)}(z=0,z=1/2) &\leq  \int_0^\half e^{u_k(s,t)}ds\\
&= \int_0^{e^{-\be/t}} e^{u_k(s,t)}ds
+\int_{e^{-\be/t}}^\half e^{u_k(s,t)}ds\\
&\leq \int_0^{e^{-\be/t}} e^{\be/t}ds
+\int_{e^{-\be/t}}^\half \frac{\sqrt{3}}{s(-\ln s)}ds\\
&=1+\sqrt{3}\bigg[-\ln(-\ln s)\bigg]_{e^{-\be/t}}^\half
\end{aligned}
\end{equation}
for $t\in(0,\min\{1,T\})$ sufficiently small.
Therefore, for sufficiently small $t\in (0,\min\{1,T\})$
we have
$$dist_{g_k(t)}(z=0,z=1/2) \leq -2\ln t$$
as desired.

\emph{Lower bound:} 
For points near the origin, we will derive control on the curvature
for a certain time depending on the proximity to the origin.
This will then show that we cannot deviate from the original metric
too much for a controlled time (again, depending on the 
proximity to the origin) and will lead to a lower bound
for the diameter.

{\bf Claim:}
There exists $C<\infty$ such that for $r>0$ sufficiently small, 
on $\partial D_r$, the conformal factor of the Ricci flow
is bounded below
by $-C-\ln[r(-\ln r)]$ for a time $\frac{1}{C(\ln r)^2}$.

Throughout the argument, $C$ will denote a positive constant which may
get larger each time it is used.

Before proving the claim, we show how it would imply the 
desired lower bound on the diameter.
Suppose that the claim holds for  $r\in (0,\overline r)$.
Then for $t>0$ sufficiently small, we would have the 
conformal factor at time $t$ bounded below by 
$-C-\ln[r(-\ln r)]$ for 
$r\in (\underline r, \overline r)$, where 
$\underline r := e^{-\frac{1}{C\sqrt{t}}}$.
Then the distance between $\partial D_{\underline r}$
and $\partial D_{\overline r}$ with respect to the time
$t$ metric must be at least
$$\int_{\underline r}^{\overline r} e^{-C-\ln[r(-\ln r)]}dr
\geq \frac{1}{C}\bigg[-\ln(-\ln r)\bigg]_{\underline r}^{\overline r}\geq \frac{1}{C}\ln (\frac{1}{C\sqrt{t}})\geq 
-\frac{1}{C}\ln t$$
for sufficiently small $t$, as desired.

\emph{Proof of claim:}
Note that the lower bound of the claim holds at $t=0$ by Lemma
\ref{lowerbdlemma}. By inspection of the Ricci flow equation
\eqref{2DRFeq},
the claim will follow if we can bound the Gauss curvature
on $\partial D_r$ by $C(\ln r)^2$ for a time $\frac{1}{C(\ln r)^2}$
(for sufficiently small $r>0$).

Consider the hyperbolic metric $H$ on $D\backslash\{0\}$
defined in terms of its conformal factor
$h(z)=-\ln(-|z|\ln |z|)$.
As $z\in D\backslash\{0\}$ approaches the origin,
the injectivity radius at $z$ with respect to $H$ is 
asymptotically $\frac{\pi}{-\ln|z|}$.
Therefore, for $z$ sufficiently close to the origin, we have
the volume ratio bound
$$\frac{\Vol_H(B_H(z,s))}{\pi s^2}\geq 1$$
for $s\in (0,\frac{\pi}{-2\ln|z|})$, say.

By virtue of Lemmata \ref{upperbdlemma} and \ref{lowerbdlemma},
we see that $g_0$ is equivalent to $H$ on 
$D_\half\backslash\{0\}$, say, and thus for
$z$ sufficiently close to the origin and $r_0=\frac1{-\ln|z|}$,
we have
$$\frac{\Vol_{g_0}(B_{g_0}(z,r_0))}{r_0^2}\geq \frac1C.$$
By applying Proposition \ref{chenprop} to the flow
$G(t)$ (or strictly speaking to $G(t+\ep)$ for arbitrarily small
$\ep>0$) on $D$ with $x_0\in D\backslash\{0\}$ sufficiently 
close to $0$ and $r_0=\frac1{-\ln|x_0|}$, we deduce the
Gauss curvature control
$$|K|(x_0)\leq C(\ln|x_0|)^2 \text{ for }
t\leq \frac{1}{C(\ln|x_0|)^2}$$
as required to complete the proof.

We remark that a by-product of the argument we have just given
is that the supremum of the conformal factor at small time $t>0$
is bounded below by $\frac{1}{C\sqrt{t}}$.
This can be compared to the upper bound $\frac Ct$
implied by Lemma \ref{flowupperbdlemma}.

\section{Alternative uniqueness issues}
\label{alt_sect}

The example of a contracting cusp that we have constructed
in Section \ref{fccsect}
demonstrates that Ricci flows are nonunique when posed as
in Definition \ref{initconddef}.
However, one can also ask whether our newly constructed
flows are unique amongst all flows which contract their
cusps, and in Theorem \ref{unique_after_all} we asserted that
they are. This section is devoted to proving that assertion.

The essential difficulty is that \emph{a priori} we
know little about the behaviour of any competitor flow
near the punctures, for small time.

Recall that $\m$ is a compact Riemann surface and 
$\{p_1,\ldots,p_n\}\subset \m$ 
is a finite set of distinct points. We have a 
complete bounded-curvature smooth conformal metric 
$g_0$ on $\n:=\m\backslash \{p_1,\ldots,p_n\}$
with strictly negative curvature in a neighbourhood of each point
$p_i$, and (from Theorem \ref{cuspcontractthm})
a complete Ricci flow $g(t)$ on $\m$
for $t\in (0,T]$ (for some $T>0$) with curvature uniformly 
bounded below, and such that 
$$g(t)\to g_0$$
smoothly locally on $\n$ as $t\downto 0$.

\cmt{do we not want to unify the notation between claims 1 and 2?
Basically, we can't use the notation of the theorem because 
we will need to switch the 2 solns to prove the thm}

{\bf Claim 1:} With 
$\tilde g(t)$ any Ricci flow as in Theorem \ref{unique_after_all},
(that is, defined on $\m$ for $t\in (0,\de)$,
with curvature bounded below and satisfying
$\tilde g(t)\to g_0$ smoothly locally on $\n$ as $t\downto 0$)
if $\si(t)$ is any Ricci flow on $\m$ for $t\in [0,\de)$
such that $\si(0)< g_0$
on $\n$, then $\si(t)\leq \tilde g(t)$ on $\m$ for $t\in (0,\de)$.

We will use Claim 1 to prove:

{\bf Claim 2:} Given two such flows $\tilde g_1(t)$ and 
$\tilde g_2(t)$ (that is, defined on $\m$ for $t\in (0,\de)$, with
curvature bounded below and converging to $g_0$
smoothly locally on $\n$ as $t\downto 0$) 
we must have $\tilde g_1(t)\leq \tilde g_2(t)$ for $t\in (0,\de)$.

Once we have established Claim 2, by switching $\tilde g_1(t)$ and 
$\tilde g_2(t)$ we will have $\tilde g_1(t)=\tilde g_2(t)$,
and by applying this in the case $\tilde g_1(t)=\tilde g(t)$
and $\tilde g_2(t)=g(t)$, we will have
finished the proof of Theorem \ref{unique_after_all}.

To prove Claim 2 from Claim 1, we will consider a scaling of the flow 
$\tilde g_1(t)$ starting at some early time $t_0>0$.
By the lower curvature bound assumption 
$K[\tilde g_1(t)]\geq -\be\leq 0$ say, we
have $e^{-\be t_0}\tilde g_1(t_0)\leq g_0$ 
and better still, $e^{-2\be t_0}\tilde g_1(t_0)<g_0$.
Therefore, the Ricci flow 
$\si(t):=e^{-2\be t_0}\tilde g_1(e^{2\be t_0}t+t_0)$
considered for $t\in [0,(\de-t_0)e^{-2\be t_0})$
satisfies the hypotheses of Claim 1 (with $\tilde g(t)$ there
equal to $\tilde g_2(t)$ here) and so
$$e^{-2\be t_0}\tilde g_1(e^{2\be t_0}t+t_0)\leq \tilde g_2(t)$$
for $t\in [0,(\de-t_0)e^{-2\be t_0})$.
Taking the limit $t_0\downto 0$ then finishes the proof of Claim 2.

It remains to prove Claim 1, and for that we need some 
\emph{a priori} control on solutions.
The key ingredient is:

{\bf Claim 3:} Take any flow $\tilde g(t)$ as above
(that is, defined on $\m$ for $t\in (0,\de)$,
with curvature bounded below and satisfying
$\tilde g(t)\to g_0$ smoothly locally on $\n$ as $t\downto 0$).
Choose
a local complex coordinate $z$ about one of the punctures $p_i$,
and write $\tilde g(t)$ locally as $e^{2u}|\dz|^2$.
Then for any $M<\infty$, we have
$$u\geq M$$
in some neighbourhood of $p_i$, for sufficiently small $t>0$.

To prove Claim 1 from Claim 3, look at a neighbourhood of a point
$p_i$ as in Claim 3. Denote the conformal factor of $\si(0)$ in
this local chart by $s$ (i.e. $\si(0)=e^{2s}|\dz|^2$) 
and define $M<\infty$ to be the
supremum of $s$ over some neighbourhood of $p_i$. By Claim 3,
we may shrink this neighbourhood and be sure that $u\geq M\geq s$
for sufficiently small $t>0$. Repeating for all the other punctures
$p_i$, 
we can find an open set $\Om\subset \m$ containing each point
$p_i$ so that $\tilde g(t)\geq \si(0)$ on $\Om$ for
$t\in (0,t_0)$ (for some $t_0\in (0,\de)$). 
By compactness of $\m\backslash\Om$ and the 
fact that $\si(0)< g_0$, we may reduce $t_0>0$ further and 
be sure that $\tilde g(t)\geq \si(0)$ throughout the whole of $\m$
for $t\in (0,t_0)$. The comparison principle then tells us that
for any $\tilde t\in (0,t_0)$, we have
$\tilde g(t+\tilde t)\geq \si(t)$ for $t\in (0,\de-\tilde t)$.
By taking the limit $\tilde t\downto 0$, Claim 1 is proved.

It now remains to prove Claim 3, and this in turn relies on
a new claim:

{\bf Claim 4:} In the setting of Claim 3, if $\Om$ is a 
neighbourhood of $p_i$ compactly contained in the neighbourhood
where $z$ is defined, then there exists \emph{some} $m\in\R$ such that
$$u\geq m$$
within $\Om$ for $t\in (0,\de/2]$.

To prove Claim 4, we use the fact that the Gauss curvature is 
uniformly bounded below by $-\be$, say.
Then by the Ricci flow equation \eqref{2DRFeq}, for $z\in\Om$,
and $t\in (0,\de/2]$,
$$u(z,t)\geq u(z,\de/2)-\be (\de/2-t)\geq 
\inf_\Om u(\cdot,\de/2)-\be \de/2=:m,$$
and the claim is proved.

Finally, we prove Claim 3 from Claim 4.
With respect to the local complex coordinate from Claim 3,
we may write $g_0$ as $e^{2u_0}|\dz|^2$.
Recalling that the curvature of $g_0$ is uniformly strictly negative
in some neighbourhood of $p_i$, we see that without loss of
generality, we may assume that the coordinate $z$ is defined
for $z\in D$, and that for $z\in D\backslash\{0\}$
the curvature of $g_0$ lies in some interval $[-C,-1]$.
By Lemma \ref{lowerbdlemma}, we then see that 
the conformal factor $u_0$ of $g_0$
is at least $M+2$ for $z$ in some small disc $D_{\ep}$ within 
the conformal chart (away from the origin).

Consider 
$$(M-u)_+:=\left\{\begin{aligned}
M-u  & \qquad\text{if }M-u>0\\
0 & \qquad\text{otherwise.}
\end{aligned}
\right.$$

Using the fact that $u\to u_0$ smoothly locally on 
$\overline{D_\ep}\backslash\{0\}$, together with 
Claim 4, we see that $\|(M-u)_+\|_{L^1(D_\ep)}\to 0$ as $t\downto 0$.
Moreover, because $u_0\geq M+2$ on $\partial D_\ep$, we see that 
$u\geq M+1$ on $\partial D_\ep$ over some nonempty time interval 
$(0,t_0)$.
Pick any smooth function $\phi:\R\to\R$ such that
\begin{compactenum}[(a)]
\item
$\phi(s)=s$ for $s\geq 1$;
\item
$\phi(s)=0$ for $s\leq -1$;
\item
$\phi''\geq 0$,
\end{compactenum}
and note that $\phi'\geq 0$.
We then have $\phi(M-u)=0$ on $\partial D_\ep$ over 
the time interval $(0,t_0)$ and 
we may compute for $t\in (0,t_0)$,
\beq
\begin{aligned}
\frac{d}{dt}\int_{D_\ep}\phi(M-u)&=-\int \phi'(M-u)u_t
=-\int \phi'(M-u)e^{-2u}\lap u\\
&= -\int e^{-2u}|\grad u|^2\left(\phi''(M-u)+2\phi'(M-u)\right)\\
&\leq 0.
\end{aligned}
\eeq
By allowing $\phi$ to decrease uniformly to the function 
$s\mapsto s_+$, we then see that
$$(M-u)_+\equiv 0$$
on $D_\ep$ for $t\in (0,t_0)$, completing the proof.

{\sc mathematics institute, university of warwick, coventry, CV4 7AL,
uk}\\
\url{http://www.warwick.ac.uk/~maseq}
\end{document}